\newcommand{\R}{{\mathbb R}}
\newcommand{\Ex}{{\mathbb E}}
\newcommand{\Prb}{{\mathbb P}}
\newcommand{\e}{\varepsilon}
\newcommand{\la}{\langle}
\newcommand{\ra}{\rangle}
\begin{document}

\title*{Mirror Descent and Convex Optimization Problems With Non-Smooth Inequality Constraints}
\titlerunning{Mirror Descent for Convex Constrained Problems}
\author{Anastasia Bayandina, Pavel Dvurechensky, Alexander Gasnikov, Fedor Stonyakin, and Alexander Titov}
\authorrunning{A. Bayandina, P. Dvurechensky, A. Gasnikov, F. Stonyakin, A. Titov}
\institute{Anastasia Bayandina \at Moscow Institute of Physics and Technology, 9 Institutskiy per., Dolgoprudny, Moscow Region, 141701, Russia and Skolkovo Institute of Science and Technology, Skolkovo Innovation Center, Building 3, Moscow,  143026, Russia \email{anast.bayandina@gmail.com}
\and Pavel Dvurechensky \at Weierstrass Institute for Applied Analysis and Stochastics, Mohrenstr. 39, Berlin, 10117, Germany and Institute for Information Transmission Problems RAS, Bolshoy Karetny per. 19, build.1, Moscow, 127051, Russia \email{pavel.dvurechensky@wias-berlin.de}
\and  Alexander Gasnikov \at Moscow Institute of Physics and Technology, 9 Institutskiy per., Dolgoprudny, Moscow Region, 141701, Russia \email{gasnikov@yandex.ru}
\and Fedor Stonyakin \at V.I. Vernadsky Crimean Federal University, 4 V. Vernadsky Ave, Simferopol, \email{fedyor@mail.ru}
\and Alexander Titov \at Moscow Institute of Physics and Technology, 9 Institutskiy per., Dolgoprudny, Moscow Region, 141701, Russia \email{a.a.titov@phystech.edu}}
%
%
\maketitle

\abstract*{
We consider the problem of minimization of a convex function on a simple set with convex non-smooth inequality constraint and describe first-order methods to solve such problems in different situations: smooth or non-smooth objective function; convex or strongly convex objective and constraint; deterministic or randomized information about the objective and constraint. Described methods are based on Mirror Descent algorithm and switching subgradient scheme. One of our focus is to propose, for the listed different settings, a Mirror Descent with adaptive stepsizes and adaptive stopping rule. We also construct Mirror Descent for problems with objective function, which is not Lipschitz, e.g. is a quadratic function. Besides that, we address the question of recovering the dual solution in the considered problem.
}

\abstract{
We consider the problem of minimization of a convex function on a simple set with convex non-smooth inequality constraint and describe first-order methods to solve such problems in different situations: smooth or non-smooth objective function; convex or strongly convex objective and constraint; deterministic or randomized information about the objective and constraint. Described methods are based on Mirror Descent algorithm and switching subgradient scheme. One of our focus is to propose, for the listed different settings, a Mirror Descent with adaptive stepsizes and adaptive stopping rule. We also construct Mirror Descent for problems with objective function, which is not Lipschitz, e.g. is a quadratic function. Besides that, we address the question of recovering the dual solution in the considered problem.
}

\section{Introduction}
\label{S:Intro}

We consider the problem of minimization of a convex function on a simple set with convex non-smooth inequality constraint and describe first-order methods to solve such problems in different situations: smooth or non-smooth objective function; convex or strongly convex objective and constraint; deterministic or randomized information about the objective and constraint. The reason for considering first-order methods is potential large (more than $10^5$) number of decision variables.

Because of the non-smoothness presented in the problem, we consider subgradient methods.
These methods have a long history starting with the method for deterministic unconstrained problems and Euclidean setting in \cite{shor1967generalized} and the generalization for constrained problems in \cite{polyak1967general}, where the idea of steps switching between the direction of subgradient of the objective and the direction of subgradient of the constraint was suggested. Non-Euclidean extension, usually referred to as Mirror Descent, originated in \cite{nemirovskii1979efficient,nemirovsky1983problem} and later analyzed in \cite{beck2003mirror}.
An extension for constrained problems was proposed in \cite{nemirovsky1983problem}, see also recent version in \cite{beck2010comirror}.
Mirror Descent for unconstrained stochastic optimization problems was introduced in \cite{nemirovski2009robust}, see also \cite{juditsky2012first-order,nedic2014stochastic}, and extended for stochastic optimization problems with expectation constraints in \cite{lan2016algorithms}.
To prove faster convergence rate of Mirror Descent for strongly convex objective in unconstrained case, the restart technique \cite{nemirovskii1985optimal,nemirovsky1983problem,nesterov1983method} was used in \cite{juditsky2012first-order}. An alternative approach for strongly convex stochastic optimization problems with strongly convex expectation constraints is used in \cite{lan2016algorithms}.

Usually, the stepsize and stopping rule for Mirror Descent requires to know the Lipschitz constant of the objective function and constraint, if any.
Adaptive stepsizes, which do not require this information, are considered in \cite{ben-tal2001lectures} for problems without inequality constraints, and in \cite{beck2010comirror} for constrained problems. Nevertheless, the stopping criterion, expressed in the number of steps, still requires knowledge of Lipschitz constants.
One of our focus in this chapter is to propose, for constrained problems, a Mirror Descent with adaptive stepsizes and adaptive stopping rule. We also adopt the ideas of \cite{nesterov2004introduction,nesterov2015subgradient} to construct Mirror Descent for problems with objective function, which is not Lipschitz, e.g. a quadratic function.
Another important issue, we address, is recovering the dual solution of the considered problem, which was considered in different contexts in \cite{anikin2016randomization,bayandina2018primal-dual,nesterov2015new}.

Formally speaking, we consider the following convex constrained minimization problem
\begin{align}
\label{eq:PrSt}
    \min \{ f(x) : \quad x \in X \subset E, \quad g(x) \leq 0\},
\end{align}
where $X$ is a convex closed subset of a finite-dimensional real vector space $E$, $f: X \to \R$, $g: E \to \R$ are convex functions.

We assume $g$ to be a non-smooth Lipschitz-continuous function and the problem \eqref{Alg:MDNS} to be regular. The last means that there exists a point $\bar{x}$ in relative interior of the set $X$, such that $g(\bar{x}) < 0$.

Note that, despite problem \eqref{eq:PrSt} contains only one inequality constraint, considered algorithms allow to solve more general problems with a number of constraints given as $\{ g_i(x) \leq 0, i=1,...,m \}$. The reason is that these constraints can be aggregated and represented as an equivalent constraint given by $\{ g(x) \leq 0\}$, where $g(x)=\max_{i=1,...,m}g_i(x)$.

The the rest of the chapter is divided in three parts. In Section~\ref{S:MD}, we describe some basic facts about Mirror Descent, namely, we define the notion of proximal setup, the Mirror Descent step, and provide the main lemma about the progress on each iteration of this method. Section~\ref{S:Det} is devoted to deterministic constrained problems, among which we consider convex non-smooth problems, strongly convex non-smooth problems and convex problems with smooth objective. The last, Section~\ref{S:Rand}, considers randomized setting with available stochastic subgradients for the objective and constraint and possibility to calculate the constraint function. We consider methods for convex and strongly convex problems and provide complexity guarantees in terms of expectation of the objective residual and constraint infeasibility, as long as in terms of large deviation probability for these two quantities.

\textbf{Notation:} Given a subset $I$ of natural numbers, we denote $|I|$ the number of its elements.

\section{Mirror Descent Basics}
\label{S:MD}
We consider algorithms, which are based on Mirror Descent method. Thus, we start with the description of proximal setup and basic properties of Mirror Descent step.
Let $E$ be a finite-dimensional real vector space and $E^*$ be its dual. We denote the value of a linear function $g \in E^*$ at $x\in E$ by $\la g, x \ra$. Let $\|\cdot\|_{E}$ be some norm on $E$, $\|\cdot\|_{E,*}$ be its dual, defined by $\|g\|_{E,*} = \max\limits_{x} \big\{ \la g, x \ra, \| x \|_E \leq 1 \big\}$. We use $\nabla f(x)$ to denote any subgradient of a function $f$ at a point $x \in {\rm dom} f$.

We choose a {\it prox-function} $d(x)$, which is continuous, convex on $X$ and
\begin{enumerate}
	\item admits a continuous in $x \in X^0$ selection of subgradients 	$\nabla d(x)$, where $X^0 \subseteq X$  is the set of all $x$, where $\nabla d(x)$ exists;
	\item $d(x)$ is $1$-strongly convex on $X$ with respect to $\|\cdot\|_{E}$, i.e., for any $x \in X^0, y \in X$ $d(y)-d(x) -\la \nabla d(x) ,y-x \ra \geq \frac12\|y-x\|_{E}^2$.
\end{enumerate}
Without loss of generality, we assume that $\min\limits_{x\in X} d(x) = 0$.

We define also the corresponding {\it Bregman divergence} $V[z] (x) = d(x) - d(z) - \la \nabla d(z), x - z \ra$, $x \in X, z \in X^0$. Standard proximal setups, i.e. Euclidean, entropy, $\ell_1/\ell_2$, simplex, nuclear norm, spectahedron can be found in \cite{ben-tal2015lectures}.

Given a vector $x\in X^0$, and a vector $p \in E^*$, the Mirror Descent step is defined as

\begin{equation}
x_+ = \mathrm{Mirr}[x](p) := \arg\min\limits_{u\in X} \big\{ \la p, u \ra + V[x](u) \big\} = \arg\min\limits_{u\in X} \big\{ \la p, u \ra + d(u) - \la \nabla d(x) , u \ra  \big\}.
\label{eq:MDStep}
\end{equation}
We make the simplicity assumption, which means that $\mathrm{Mirr}[x] (p)$ is easily computable.
The following lemma \cite{ben-tal2001lectures} describes the main property of the Mirror Descent step. We prove it here for the reader convenience and to make the chapter self-contained.
\begin{lemma}
	\label{Lm:MDProp}
    Let $f$ be some convex function over a set $X$, $h > 0$ be a stepsize, $x \in X^{0}$. Let the point $x_+$ be defined by
   $ x_+ = \mathrm{Mirr}[x](h \cdot (\nabla f(x) + \Delta)) $, where $\Delta \in E^*$. Then, for any $u \in X$,
    \begin{align}
    h \cdot \big( f(x) - f(u) + \la \Delta , x - u \ra \big) & \leq h \cdot \la \nabla f(x) + \Delta , x - u \ra \notag \\
		& \leq \frac{h^2}{2} \| \nabla f (x)  + \Delta \|^2_{E,*} + V[x](u) - V[x_+](u).				 \label{eq:MDProp}
    \end{align}
\end{lemma}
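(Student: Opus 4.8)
The plan is to prove the two inequalities in \eqref{eq:MDProp} separately, since the first one is an immediate consequence of convexity and the second one is the substantive part requiring the optimality condition for the Mirror Descent step.

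For the first inequality, I would simply invoke convexity of $f$. By definition of the subgradient, $f(u) \geq f(x) + \la \nabla f(x), u - x \ra$, which rearranges to $f(x) - f(u) \leq \la \nabla f(x), x - u \ra$. Adding $\la \Delta, x - u \ra$ to both sides and multiplying by $h > 0$ yields $h \cdot \big( f(x) - f(u) + \la \Delta, x - u \ra \big) \leq h \cdot \la \nabla f(x) + \Delta, x - u \ra$, which is exactly the left-hand inequality. This step is routine.

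The second inequality is where the real work lies. First I would write down the first-order optimality condition for the minimization problem defining $x_+ = \mathrm{Mirr}[x](p)$ with $p = h \cdot (\nabla f(x) + \Delta)$. Since $x_+$ minimizes $\la p, u \ra + d(u) - \la \nabla d(x), u \ra$ over the convex set $X$, the optimality condition reads $\la p + \nabla d(x_+) - \nabla d(x), u - x_+ \ra \geq 0$ for all $u \in X$. Rearranging gives $\la p, x_+ - u \ra \leq \la \nabla d(x) - \nabla d(x_+), x_+ - u \ra$. The key algebraic identity I would then exploit is the ``three-point'' relation for Bregman divergences, namely that $\la \nabla d(x) - \nabla d(x_+), x_+ - u \ra = V[x](u) - V[x_+](u) - V[x](x_+)$, which follows directly from expanding the definition $V[z](w) = d(w) - d(z) - \la \nabla d(z), w - z \ra$.

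With that identity in hand, I would combine it with the optimality condition to obtain $\la p, x_+ - u \ra \leq V[x](u) - V[x_+](u) - V[x](x_+)$. The remaining task is to bound $\la p, x - u \ra = \la p, x - x_+ \ra + \la p, x_+ - u \ra$, so I need to control the term $\la p, x - x_+ \ra - V[x](x_+)$. Here I would use strong convexity of $d$, which gives $V[x](x_+) \geq \frac{1}{2}\|x_+ - x\|_E^2$, together with the Fenchel--Young inequality $\la p, x - x_+ \ra \leq \|p\|_{E,*}\|x - x_+\|_E \leq \frac{1}{2}\|p\|_{E,*}^2 + \frac{1}{2}\|x - x_+\|_E^2$. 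The two quadratic-in-$\|x-x_+\|_E$ terms then cancel, leaving $\la p, x - u \ra \leq \frac{1}{2}\|p\|_{E,*}^2 + V[x](u) - V[x_+](u)$. Substituting back $p = h(\nabla f(x) + \Delta)$ gives $\frac{1}{2}\|p\|_{E,*}^2 = \frac{h^2}{2}\|\nabla f(x) + \Delta\|_{E,*}^2$ and $\la p, x - u \ra = h \la \nabla f(x) + \Delta, x - u \ra$, completing the second inequality. The main obstacle to watch out for is handling the subgradient $\nabla d$ correctly at the boundary of $X$ and ensuring $x_+ \in X^0$ so that $V[x_+](u)$ is well-defined; this is precisely what the continuity-of-subgradients assumption on the prox-function is designed to secure.
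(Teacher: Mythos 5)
Your proposal is correct and follows essentially the same route as the paper's proof: the left inequality via convexity of $f$, and the right one via the first-order optimality condition for the Mirror step, the three-point Bregman identity, $1$-strong convexity of $d$ to get $V[x](x_+)\geq\frac12\|x_+-x\|_E^2$, and the Fenchel--Young bound $\la p, x-x_+\ra-\frac12\|x-x_+\|_E^2\leq\frac12\|p\|_{E,*}^2$ (which the paper phrases as $\max_{y}\la g,y\ra-\frac12\|y\|_E^2=\frac12\|g\|_{E,*}^2$). The only cosmetic difference is that you name the three-point identity explicitly where the paper expands the Bregman divergences inline.
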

\begin{proof}
\smartqed
By optimality condition in \eqref{eq:MDStep}, we have that there exists a subgradient $\nabla d(x_+)$, such that, for all $u \in X$,
$$
\la h \cdot (\nabla f(x) + \Delta) + \nabla d(x_+) - \nabla d(x) , u - x_+ \ra \geq 0.
$$
Hence, for all $u \in X$,
\begin{align}
\la h \cdot (\nabla f(x) + \Delta) , x- u \ra & \leq \la h \cdot (\nabla f(x) + \Delta) , x- x_+ \ra + \la \nabla d(x_+)-\nabla d(x),u - x_+ \ra \notag \\
& = \la h \cdot (\nabla f(x) + \Delta) , x- x_+ \ra  + \left(d(u) - d(x) - \la \nabla d(x), u-x \ra \right) \notag \\
& \hspace{1em} - \left(d(u) - d(x_+) - \la \nabla d(x_+), u-x_+ \ra \right) \notag \\
& \hspace{1em} - \left(d(x_+) - d(x) - \la \nabla d(x), x_+-x \ra \right) \notag \\
& \leq \la h \cdot (\nabla f(x) + \Delta) , x- x_+ \ra  + V[x](u)-V[x_+](u) - \frac{1}{2}\|x_+-x\|_E^2 \notag \\
& \leq V[x](u)-V[x_+](u) + \frac{h^2}{2}\|(\nabla f(x) + \Delta)\|_{E,*}^2,\notag
\end{align}
where we used the fact that, for any $g \in E^*$,
$$
\max_{y\in E} \la g, y \ra - \frac{1}{2}\|y\|_E^2 = \frac{1}{2}\|g\|_{E,*}^2.
$$
By convexity of $f$, we obtain the left inequality in \eqref{eq:MDProp}.
\qed
\end{proof}

\section{Deterministic Constrained Problems}
\label{S:Det}
In this section, we consider problem \eqref{eq:PrSt} in two different settings, namely, non-smooth Lipschitz-continuous objective function $f$ and general objective function $f$, which is not necessarily Lipschitz-continuous, e.g. a quadratic function. In both cases, we assume that $g$ is non-smooth and is Lipschitz-continuous
\begin{equation}
|g(x)-g(y)|\leq M_g\|x-y\|_E, \quad x,y \in X.
\label{eq:gLipCont}
\end{equation}
Let $x_*$ be a solution to \eqref{eq:PrSt}. We say that a point $\tilde{x} \in X$ is an \textit{$\e$-solution} to \eqref{eq:PrSt} if
\begin{equation}
    \label{eq:DetSolDef}
        f(\tilde{x}) - f(x_{*}) \leq \e, \quad g(\tilde{x}) \leq \e.
\end{equation}

The methods we describe are based on the of Polyak's switching subgradient method \cite{polyak1967general} for constrained convex problems, also analyzed in \cite{nesterov2004introduction}, and Mirror Descent method originated in \cite{nemirovsky1983problem}; see also \cite{ben-tal2001lectures}.

\subsection{Convex Non-Smooth Objective Function}
\label{S:CNSOF}
In this subsection, we assume that $f$ is a non-smooth Lipschitz-continuous function
\begin{equation}
|f(x)-f(y)|\leq M_f\|x-y\|_E, \quad x,y \in X.
\label{eq:fLipCont}
\end{equation}
Let $x_*$ be a solution to \eqref{eq:PrSt} and assume that we know a constant $\Theta_0 > 0$ such that
\begin{equation}
d(x_{*}) \leq \Theta_0^2.
\label{eq:dx*Bound}
\end{equation}
For example, if $X$ is a compact set, one can choose $\Theta_0^2 = \max_{ x \in X} d(x)$.
We further develop line of research \cite{anikin2016randomization,bayandina2018primal-dual}, but we should also mention close works \cite{beck2010comirror,nesterov2015new}. In comparison to known algorithms in the literature, the main advantage of our method for solving \eqref{eq:PrSt}  is that the stopping criterion does not require the knowledge of constants $M_f,M_g$, and, in this sense, the method is adaptive. Mirror Descent with stepsizes not requiring knowledge of Lipschitz constants can be found, e.g., in \cite{ben-tal2001lectures} for problems without inequality constraints, and, for constrained problems, in \cite{beck2010comirror}.The algorithm is similar to the one in  \cite{bayandina2017adaptive}, but, for the sake of consistency with other parts of the chapter, we use slightly different proof.

\begin{algorithm}[h!]
\caption{Adaptive Mirror Descent (Non-Smooth Objective)}
\label{Alg:MDNS}
\begin{algorithmic}[1]
  \REQUIRE accuracy $\e > 0$; $\Theta_0$ s.t. $d(x_{*}) \leq \Theta_0^2$.
  \STATE  $x^0 = \arg\min\limits_{x\in X} d(x)$.
	\STATE Initialize the set $I$ as empty set.
	\STATE Set $k=0$.
	\REPEAT
    \IF{$g(x^k) \leq \e$}
      \STATE $M_k = \| \nabla f(x^k) \|_{E,*}$,
      \STATE $h_k = \frac{\e}{M_k^2}$
      \STATE $x^{k+1} =\mathrm{Mirr}[x^k](h_k \nabla f(x^k))$ ("productive step")
      \STATE Add $k$ to $I$.
    \ELSE
      \STATE $M_k = \| \nabla g(x^k) \|_{E,*}$
      \STATE $h_k = \frac{\e}{M_k^2}$
      \STATE $x^{k+1} = \mathrm{Mirr}[x^k](h_k \nabla g(x^k))$ ("non-productive step")
    \ENDIF
    \STATE Set $k = k + 1$.
  \UNTIL{$\sum\limits_{j =0 }^{k-1} \frac{1}{M_j^2} \geq \frac{2\Theta_0^2}{\e^2}$}
  \ENSURE $\bar{x}^k := \frac{\sum\limits_{i\in I} h_i x^i}{\sum\limits_{i\in I} h_i}$
\end{algorithmic}
\end{algorithm}

\begin{theorem}
	\label{Th:MDCompl}
	Assume that inequalities \eqref{eq:gLipCont} and \eqref{eq:fLipCont} hold and a known constant $\Theta_0 > 0$ is such that $
d(x_{*}) \leq \Theta_0^2$. Then, Algorithm \ref{Alg:MDNS} stops after not more than
	\begin{equation}
	k = \left\lceil\frac{2\max\{M_f^2,M_g^2\} \Theta_0^2}{\e^2}\right\rceil
	\label{eq:MDNSComplEst}
	\end{equation}
	iterations and $\bar{x}^k$ is an $\e$-solution to \eqref{eq:PrSt} in the sense of \eqref{eq:DetSolDef}.
\end{theorem}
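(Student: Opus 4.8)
The plan is to invoke the progress Lemma~\ref{Lm:MDProp} (with $\Delta = 0$) at every iteration, telescope the Bregman terms, and treat productive and non-productive steps separately, always evaluating at $u = x_*$. The crucial algebraic simplification is that the choice $h_k = \e/M_k^2$ collapses the quadratic term: $\frac{h_k^2}{2}M_k^2 = \frac{h_k\e}{2}$. Thus for a productive step ($k\in I$, where $M_k = \|\nabla f(x^k)\|_{E,*}$) the lemma gives
$$h_k\big(f(x^k) - f(x_*)\big) \le \tfrac{h_k\e}{2} + V[x^k](x_*) - V[x^{k+1}](x_*),$$
while for a non-productive step ($k\notin I$, where $M_k=\|\nabla g(x^k)\|_{E,*}$) I would use $g(x_*)\le 0 < \e < g(x^k)$ to replace the left-hand side by $h_k\e$, obtaining the \emph{strict} estimate $\frac{h_k\e}{2} < V[x^k](x_*) - V[x^{k+1}](x_*)$.

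For the iteration bound \eqref{eq:MDNSComplEst} I would use Lipschitz continuity directly: since $\|\nabla f(x^k)\|_{E,*}\le M_f$ and $\|\nabla g(x^k)\|_{E,*}\le M_g$, every $M_k$ is at most $\max\{M_f,M_g\}$, so each term $1/M_k^2$ is at least $1/\max\{M_f^2,M_g^2\}$. Hence after $N$ steps $\sum_{j=0}^{N-1}1/M_j^2 \ge N/\max\{M_f^2,M_g^2\}$, which reaches the threshold $2\Theta_0^2/\e^2$ as soon as $N \ge 2\max\{M_f^2,M_g^2\}\Theta_0^2/\e^2$, yielding the stated ceiling.

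Summing the per-step inequalities over $j=0,\dots,N-1$ (with $N$ the stopping index), the Bregman terms telescope to $V[x^0](x_*)-V[x^N](x_*)\le V[x^0](x_*)\le d(x_*)\le\Theta_0^2$, where I use that $x^0$ minimizes $d$ and $\min_X d = 0$. After combining the two families of inequalities so that all Bregman terms telescope together, and abbreviating $S_I = \sum_{i\in I}h_i$, $S_J = \sum_{k\notin I}h_k$, one is left with
$$\sum_{i\in I}h_i\big(f(x^i)-f(x_*)\big) \le \Theta_0^2 + \tfrac{\e}{2}S_I - \tfrac{\e}{2}S_J.$$
The stopping rule guarantees $\sum_j h_j = \e\sum_j 1/M_j^2 \ge 2\Theta_0^2/\e$, i.e. $\Theta_0^2 \le \tfrac{\e}{2}(S_I+S_J)$; substituting this kills the $\Theta_0^2$ term, and after dividing by $S_I$ and applying Jensen's inequality to the convex $f$ at $\bar x^N=\big(\sum_{i\in I}h_i x^i\big)/S_I$ one gets exactly $f(\bar x^N)-f(x_*)\le\e$. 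Feasibility is then immediate: by convexity of $g$ and the fact that $g(x^i)\le\e$ for every productive index $i\in I$, we obtain $g(\bar x^N)\le\big(\sum_{i\in I}h_i g(x^i)\big)/S_I\le\e$.

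The step I expect to require the most care is guaranteeing $\bar x^N$ is well defined, i.e. $I\neq\emptyset$: if no productive step ever occurred, the \emph{strict} non-productive estimate would force $\tfrac{\e}{2}S_J < \Theta_0^2$, hence $\sum_j 1/M_j^2 < 2\Theta_0^2/\e^2$, contradicting the fact that the algorithm stopped. Beyond this, the main bookkeeping obstacle is keeping the signs of the $S_I$ and $S_J$ terms straight when merging the two families of inequalities, so that the telescoped Bregman bound and the stopping rule conspire to produce the clean final constant $\e$ rather than something larger.
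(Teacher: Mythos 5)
Your proposal is correct and follows essentially the same route as the paper's proof: Lemma~\ref{Lm:MDProp} with $\Delta=0$ at $u=x_*$, the cancellation $\tfrac{h_k^2}{2}M_k^2=\tfrac{h_k\e}{2}$, telescoping the Bregman terms against $V[x^0](x_*)\le d(x_*)\le\Theta_0^2$, using $g(x^i)>\e$ on non-productive steps together with the stopping rule to absorb $\Theta_0^2$, and convexity for both the objective bound and feasibility. Your explicit contradiction argument for $I\neq\emptyset$ is just a rephrasing of the paper's observation that the final strict inequality rules out the empty case.
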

\begin{proof}
\smartqed
First, let us prove that the inequality in the stopping criterion holds for $k$ defined in \eqref{eq:MDNSComplEst}.
By \eqref{eq:gLipCont} and \eqref{eq:fLipCont}, we have that, for any $i \in \{0,...,k-1\}$, $M_i \leq \max\{M_f,M_g\}$. Hence, by \eqref{eq:MDNSComplEst}, $\sum\limits_{j =0 }^{k-1} \frac{1}{M_j^2} \geq \frac{k}{\max\{M_f^2,M_g^2\}} \geq \frac{2\Theta_0^2}{\e^2}$.

Denote $[k] = \{ i \in \{0,...,k-1\} \}$, $J = [k] \setminus I$. From Lemma \ref{Lm:MDProp} with $\Delta = 0$, we have, for all $i \in I$ and all $u \in X$,
$$
h_i \cdot \big( f(x^i) - f(u) \big) \leq \frac{h_i^2}{2} \| \nabla f (x^i) \|^2_{E,*} + V[x^i](u) - V[x^{i+1}](u)
$$
and, for all $i \in J$ and all $u \in X$,
$$
h_i \cdot \big( g(x^i) - g(u) \big) \leq \frac{h_i^2}{2} \| \nabla g (x^i) \|^2_{E,*} + V[x^i](u) - V[x^{i+1}](u).
$$
Summing up these inequalities for $i$ from $0$ to $k-1$, using the definition of $h_i$, $i \in \{0,...,k-1\}$, and taking $u=x_*$, we obtain
\begin{align}
  &\sum\limits_{i\in I} h_i \big( f(x^i) - f(x_{*}) \big) + \sum\limits_{i\in J} h_i \big( g(x^i) - g(x_{*}) \big) \notag \\
  & \leq \sum\limits_{i\in I} \frac{h_i^2 M_i^2}{2} + \sum\limits_{i\in J} \frac{h_i^2 M_i^2}{2} + \sum\limits_{i\in [k]} \big( V[x^i](x_{*}) - V[x^{i+1}](x_{*}) \big)\notag \\
	& \leq \frac{\e}{2} \sum\limits_{i\in [k]} h_i + \Theta_0^2.
	\label{eq:ThMDCproof1}
\end{align}
We also used that, by definition of $x^0$ and \eqref{eq:dx*Bound},
$$
V[x^0](x_{*}) = d(x_{*}) - d(x^0) - \la \nabla d(x^0), x_{*} - x^0 \ra \leq  d(x_{*}) \leq \Theta_0^2.
$$
Since, for $i\in J$, $g(x^i) - g(x_{*}) \geq g(x^i) > \e$, by convexity of $f$ and the definition of $\bar{x}^k$, we have
\begin{align}
\left(\sum\limits_{i\in I} h_i \right) \big( f(\bar{x}^k) - f(x_{*}) \big) & \leq \sum\limits_{i\in I} h_i \left( f(x^i) - f(x_{*}) \right) < \frac{\e}{2} \sum\limits_{i\in [k]} h_i - \e\sum\limits_{i\in J} h_i + \Theta_0^2 \notag \\
& = \e\sum\limits_{i\in I} h_i - \frac{\e^2}{2} \sum\limits_{i\in [k]} \frac{1}{M_i^2} + \Theta_0^2 \leq \e\sum\limits_{i\in I} h_i,
\label{eq:ThMDCproof2}
\end{align}
where in the last inequality, the stopping criterion is used. As long as the inequality is strict, the case of the empty $I$ is impossible. Thus, the point $\bar{x}^k$ is correctly defined. Dividing both parts of the inequality by $\sum\limits_{i\in I} h_i$, we obtain the left inequality in \eqref{eq:DetSolDef}.

For $i\in I$, it holds that $g(x^i) \leq \e$. Then, by the definition of $\bar{x}^k$ and the convexity of $g$,
$$
  g(\bar{x}^k) \leq \left(\sum\limits_{i\in I} h_i\right)^{-1} \sum\limits_{i\in I} h_i g(x^i) \leq \e.
$$
\qed
\end{proof}

Let us now show that Algorithm \ref{Alg:MDNS} allows to reconstruct an approximate solution to the problem, which is dual to \eqref{eq:PrSt}. We consider a special type of problem \eqref{eq:PrSt} with $g$ given by
\begin{equation}
g(x) = \max\limits_{i \in \{1,..., m\}} \big\{ g_i(x) \big\}.
\label{eq:gMaxDef}
\end{equation}
Then, the dual problem to \eqref{eq:PrSt} is
\begin{equation}\label{eq:dualPrSt}
    \varphi (\lambda) = \min\limits_{x\in X} \Big\{ f(x) + \sum\limits_{i=1}^{m} \lambda_i g_i(x) \Big\} \rightarrow \max\limits_{\lambda_i \geq 0, i = 1, ..., m}\varphi (\lambda),
\end{equation}
where $\lambda_i \geq 0, i = 1, ..., m$ are Lagrange multipliers.

We slightly modify the assumption \eqref{eq:dx*Bound} and assume that the set $X$ is bounded and that we know a constant $\Theta_0 > 0$ such that
$$
\max_{x \in X} d(x) \leq \Theta_0^2.
$$

As before, denote $[k] = \{ j \in \{0,...,k-1\} \}$, $J = [k] \setminus I$.
Let $j \in J$. Then a subgradient of $g(x)$ is used to make the $j$-th step of Algorithm \ref{Alg:MDNS}. To find this subgradient, it is natural to find an active constraint $i \in 1, ..., m$ such that $g(x^j) = g_{i}(x^j)$ and use $\nabla g(x^j) = \nabla g_{i}(x^j)$ to make a step. Denote $i(j) \in 1, ..., m$ the number of active constraint, whose subgradient is used to make a non-productive step at iteration $j \in J$. In other words, $g(x^j) = g_{i(j)}(x^j)$ and $\nabla g(x^j) = \nabla g_{i(j)}(x^j)$.
We define an approximate dual solution on a step $k\geq 0$ as
    \begin{equation}\label{eq:lamDef}
        \bar{\lambda}_i^k = \frac{1}{\sum\limits_{j\in I} h_j} \sum\limits_{j\in J, i(j) = i} h_j, \quad i \in \{1, ..., m\}.
    \end{equation}
		and modify Algorithm \ref{Alg:MDNS} to return a pair $(\bar{x}^k,\bar{\lambda}^k)$.

\begin{theorem}
\label{Th:MDPDCompl}
    Assume that the set $X$ is bounded, the inequalities \eqref{eq:gLipCont} and \eqref{eq:fLipCont} hold and a known constant $\Theta_0 > 0$ is such that $
d(x_{*}) \leq \Theta_0^2$.
    Then, modified Algorithm \ref{Alg:MDNS} stops after not more than
	\begin{equation}
	k = \left\lceil\frac{2\max\{M_f^2,M_g^2\} \Theta_0^2}{\e^2}\right\rceil \notag
		\end{equation}
	iterations and the pair $(\bar{x}^k,\bar{\lambda}^k)$ returned by this algorithm satisfies
    \begin{equation}
		\label{eq:DetPDSolDef}
        f(\bar{x}^k) - \varphi(\bar{\lambda}^k) \leq \e, \quad g(\bar{x}^k) \leq \e.
    \end{equation}
\end{theorem}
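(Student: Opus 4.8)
The plan is to leave the feasibility bound $g(\bar{x}^k)\le\e$ exactly as in the proof of Theorem~\ref{Th:MDCompl}, since neither the definition of $\bar{x}^k$ nor the productive steps are affected by the modification; the whole new content lies in the duality-gap estimate $f(\bar{x}^k)-\varphi(\bar{\lambda}^k)\le\e$. The starting point is again Lemma~\ref{Lm:MDProp} applied at every iteration with $\Delta=0$, but on a non-productive step $j\in J$ I would retain the information about which constraint is active. Concretely, since $g(x^j)=g_{i(j)}(x^j)$ and $\nabla g(x^j)=\nabla g_{i(j)}(x^j)$, the middle inequality of \eqref{eq:MDProp} together with convexity of $g_{i(j)}$ gives
\begin{align}
h_j\big(g_{i(j)}(x^j)-g_{i(j)}(u)\big) &\le h_j\la\nabla g_{i(j)}(x^j),x^j-u\ra \notag\\
&\le \frac{h_j^2}{2}\|\nabla g_{i(j)}(x^j)\|_{E,*}^2+V[x^j](u)-V[x^{j+1}](u)\notag
\end{align}
for every $u\in X$; on productive steps $i\in I$ I keep the $f$-inequality as before.

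Summing over all $i\in[k]$, substituting $h_iM_i^2=\e$ so that $\tfrac{h_i^2M_i^2}{2}=\tfrac{\e}{2}h_i$, telescoping the Bregman terms, and bounding $V[x^0](u)\le d(u)\le\Theta_0^2$ (here the strengthened hypothesis that $X$ is bounded with $\max_{x\in X}d(x)\le\Theta_0^2$ is precisely what allows this for an arbitrary $u$, not only $u=x_*$), I obtain
$$
\sum_{i\in I}h_i f(x^i)+\sum_{j\in J}h_j g_{i(j)}(x^j)-\Big(\sum_{i\in I}h_i\Big)\Big[f(u)+\sum_{i=1}^m\bar{\lambda}_i^k g_i(u)\Big]\le \frac{\e}{2}\sum_{i\in[k]}h_i+\Theta_0^2,
$$
where the crucial rearrangement is to recognize, from the definition \eqref{eq:lamDef} of $\bar{\lambda}^k$, that $\sum_{j\in J}h_j g_{i(j)}(u)=\big(\sum_{i\in I}h_i\big)\sum_{i=1}^m\bar{\lambda}_i^k g_i(u)$; that is, the aggregated active-constraint terms assemble exactly into the Lagrangian evaluated at the computed multipliers.

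Since the inequality holds for every $u\in X$, I take the infimum over $u$ on the left, which replaces the bracketed Lagrangian by $\varphi(\bar{\lambda}^k)$. Then I apply convexity of $f$ through $\big(\sum_{i\in I}h_i\big)f(\bar{x}^k)\le\sum_{i\in I}h_i f(x^i)$, and use $g_{i(j)}(x^j)=g(x^j)>\e$ on every non-productive step to bound $\sum_{j\in J}h_j g_{i(j)}(x^j)>\e\sum_{j\in J}h_j$. After writing $\sum_{i\in[k]}h_i=\sum_{i\in I}h_i+\sum_{j\in J}h_j$, the $\sum_{j\in J}h_j$ contributions combine into a single term $-\tfrac{\e}{2}\sum_{j\in J}h_j$, and the stopping criterion, which forces $\sum_{i\in[k]}h_i=\e\sum_{i\in[k]}M_i^{-2}\ge 2\Theta_0^2/\e$, i.e. $\Theta_0^2\le\tfrac{\e}{2}\sum_{i\in[k]}h_i$, absorbs that term so everything collapses to $\big(\sum_{i\in I}h_i\big)\big(f(\bar{x}^k)-\varphi(\bar{\lambda}^k)\big)<\e\sum_{i\in I}h_i$. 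Dividing by the weight $\sum_{i\in I}h_i$, which is nonzero by the same strict-inequality argument as in Theorem~\ref{Th:MDCompl}, yields the claim.

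The step I expect to be the main obstacle is the bookkeeping in the second and third paragraphs: one must verify that the infimum over $u$ may legitimately be pushed into the Lagrangian, which needs the bound to hold uniformly in $u$ (exactly why the hypothesis is strengthened to $\max_{x\in X}d(x)\le\Theta_0^2$), and that the weight multiplying the Lagrangian is precisely $\sum_{i\in I}h_i$, the same normalization appearing in both $\bar{x}^k$ and $\bar{\lambda}^k$. Tracking these matching normalizations, rather than any single estimate, is where an error is most likely to creep in.
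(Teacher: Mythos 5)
Your proposal is correct and follows essentially the same route as the paper's own proof: Lemma~\ref{Lm:MDProp} on each step with the active constraint $g_{i(j)}$ tracked on non-productive steps, the identity $\sum_{j\in J}h_j g_{i(j)}(u)=\big(\sum_{j\in I}h_j\big)\sum_{i=1}^m\bar{\lambda}_i^k g_i(u)$, minimization over $u$ to produce $\varphi(\bar{\lambda}^k)$, and the strict-inequality argument for $I\neq\emptyset$; the only difference is the (immaterial) order in which you invoke the stopping criterion versus the Lagrangian rearrangement. Your explicit remark that the uniform bound $V[x^0](u)\le\Theta_0^2$ for all $u\in X$ is exactly where the strengthened hypothesis $\max_{x\in X}d(x)\le\Theta_0^2$ is needed is a point the paper's proof leaves implicit.
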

\begin{proof}
\smartqed
From Lemma \ref{Lm:MDProp} with $\Delta = 0$, we have, for all $j \in I$ and all $u \in X$,
$$
h_j \big( f(x^j) - f(u) \big) \leq \frac{h_j^2}{2} \| \nabla f (x^j) \|^2_{E,*} + V[x^j](u) - V[x^{j+1}](u)
$$
and, for all $j \in J$ and all $u \in X$,
\begin{align}
h_j \big( g_{i(j)}(x^j) - g_{i(j)}(u) \big) & \leq h_j \la \nabla g_{i(j)}(x^j), x^j - u \ra \notag \\
& =  h_j \la \nabla g(x^j), x^j - u \ra \notag \\
& \leq \frac{h_j^2}{2} \| \nabla g (x^j) \|^2_{E,*} + V[x^j](u) - V[x^{j+1}](u). \notag
\end{align}
Summing up these inequalities for $j$ from $0$ to $k-1$, using the definition of $h_j$, $j \in \{0,...,k-1\}$, we obtain, for all $u \in X$,
\begin{align}
  \sum\limits_{j\in I} h_j \big( f(x^j) - f(u) \big) &+ \sum\limits_{j\in J} h_j \big( g_{i(j)}(x^j) - g_{i(j)}(u) \big)\notag \\
  & \leq \sum\limits_{i\in I} \frac{h_j^2 M_j^2}{2} + \sum\limits_{j\in J} \frac{h_j^2 M_j^2}{2} + \sum\limits_{j\in [k]} \big( V[x^j](u) - V[x^{j+1}](u) \big)\notag \\
	& \leq \frac{\e}{2} \sum\limits_{j\in [k]} h_j + \Theta_0^2.  \notag
\end{align}
Since, for $j\in J$, $g_{i(j)}(x^j) = g(x^j) > \e$, by convexity of $f$ and the definition of $\bar{x}^k$, we have, for all $u \in X$,
\begin{align}
\left(\sum\limits_{j\in I} h_j \right) \big( f(\bar{x}^k) - f(u) \big) & \leq \sum\limits_{j\in I} h_j \left( f(x^j) - f(u) \right) \notag \\
& \leq \frac{\e}{2} \sum\limits_{j\in [k]} h_j + \Theta_0^2 - \sum\limits_{j\in J} h_j \big( g_{i(j)}(x^j) - g_{i(j)}(u) \big)  \notag \\
& < \frac{\e}{2} \sum\limits_{j\in [k]} h_i  + \Theta_0^2 - \e\sum\limits_{j\in J} h_i + \sum\limits_{j\in J} h_j g_{i(j)}(u) \notag \\
& = \e\sum\limits_{j\in I} h_j - \frac{\e^2}{2} \sum\limits_{j\in [k]} \frac{1}{M_j^2} + \Theta_0^2 + \sum\limits_{j\in J} h_j g_{i(j)}(u) \notag \\
& \leq \e\sum\limits_{j\in I} h_j + \sum\limits_{j\in J} h_j g_{i(j)}(u),
\label{eq:ThMDPDComplproof1}
\end{align}
where in the last inequality, the stopping criterion is used.
At the same time, by \eqref{eq:lamDef}, for all $u \in X$,
$$
\sum\limits_{j\in J} h_j g_{i(j)}(u) = \sum_{i=1}^m \sum_{j\in J, i(j)=i} h_j g_{i(j)}(u) = \left(\sum\limits_{j\in I} h_j \right) \sum_{i=1}^m \bar{\lambda}_i^k g_{i}(u).
$$
This and \eqref{eq:ThMDPDComplproof1} give, for all $u \in X$,
\begin{align}
\left(\sum\limits_{j\in I} h_j \right)  f(\bar{x}^k)  & < \left(\sum\limits_{j\in I} h_j \right) \left( f(u) + \e +  \sum_{i=1}^m \bar{\lambda}_i^k g_{i}(u) \right). \notag
\end{align}
Since the inequality is strict and holds for all $u \in X$, we have $\left(\sum\limits_{j\in I} h_j \right) \ne 0$ and
\begin{align}
f(\bar{x}^k)  & < \e  + \min_{u \in X} \left\{ f(u) +  \sum_{i=1}^m \bar{\lambda}_i^k g_{i}(u) \right\}  \notag \\
& =  \e + \varphi(\bar{\lambda}^k).
\end{align}
Second inequality in \eqref{eq:DetPDSolDef} follows from Theorem \ref{Th:MDCompl}.
\qed
\end{proof}

\subsection{Strongly Convex Non-smooth Objective Function}
\label{S:SCNS}
In this subsection, we consider problem \eqref{eq:PrSt} with assumption \eqref{eq:fLipCont} and additional assumption of strong convexity of $f$ and $g$ with the same parameter $\mu$, i.e.,
$$
f(y) \geq f(x) + \la \nabla f(x), y-x \ra + \frac{\mu}{2} \| y-x \|_E^2, \quad x, y \in X
$$
and the same holds for $g$. For example, $f(x) = x^2+|x|$ is a Lipschitz-continuous and strongly convex function on $X = [-1; 1] \subset \mathbb{R}$.  We also slightly modify assumptions on prox-function $d(x)$. Namely, we assume that $0 = \arg \min_{x \in X} d(x)$ and that $d$ is bounded on the unit ball in the chosen norm $\|\cdot\|_E$, that is
\begin{equation}
d(x) \leq \frac{\Omega}{2}, \quad \forall x\in X : \|x \|_E \leq 1,
\label{eq:dUpBound}
\end{equation}
where $\Omega$ is some known number. Finally, we assume that we are given a starting point $x_0 \in X$ and a number $R_0 >0$ such that $\| x_0 - x_* \|_E^2 \leq R_0^2$.

To construct a method for solving problem \eqref{eq:PrSt} under stated assumptions, we use the idea of restarting Algorithm \ref{Alg:MDNS}. The idea of restarting a method for convex problems to obtain faster rate of convergence for strongly convex problems dates back to 1980's, see \cite{nemirovsky1983problem,nesterov1983method}. The algorithm is similar to the one in \cite{bayandina2017adaptive}, but, for the sake of consistency with other parts of the chapter, we use slightly different proof. To show that restarting algorithm is also possible for problems with inequality constraints, we rely on the following lemma.
\begin{lemma}
\label{Lm:SCFuncToArg}
Let $f$ and $g$ be strongly convex functions with the same parameter $\mu$ and $x_*$ be a solution of the problem~\eqref{eq:PrSt}. If, for some $\tilde{x}\in X$,
    $$f(\tilde{x}) - f(x_*) \leq \e, \hspace{0.3cm} g(\tilde{x}) \leq\e,$$
    then
    $$\frac{\mu}{2} \| \tilde{x} - x_* \|_E^2 \leq \e.$$
\end{lemma}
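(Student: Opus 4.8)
The plan is to exploit the optimality of $x_*$ together with the strong convexity of both $f$ and $g$ along the segment joining $x_*$ and $\tilde x$. It suffices to treat the nontrivial case $\frac{\mu}{2}\|\tilde x - x_*\|_E^2 > \e$ and derive a contradiction; otherwise the claim holds directly.

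For $t\in[0,1]$ I set $x_t = (1-t)x_* + t\tilde x$, which lies in $X$ by convexity. First I would use $\mu$-strong convexity of $g$ to control the constraint along the segment:
\[
g(x_t) \le (1-t)g(x_*) + t\, g(\tilde x) - \frac{\mu}{2}t(1-t)\|\tilde x - x_*\|_E^2 \le t\Big(\e - \frac{\mu}{2}(1-t)\|\tilde x - x_*\|_E^2\Big),
\]
where I used $g(x_*)\le 0$ and $g(\tilde x)\le\e$. Since $\frac{\mu}{2}\|\tilde x - x_*\|_E^2 > \e$, by continuity there is a $t\in(0,1)$ with $\frac{\mu}{2}(1-t)\|\tilde x - x_*\|_E^2 > \e$; for such a $t$ the bracket is negative, so $g(x_t) < 0$ and hence $x_t$ is feasible for \eqref{eq:PrSt}.

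Then I would invoke optimality of $x_*$, namely $f(x_*)\le f(x_t)$, and combine it with $\mu$-strong convexity of $f$:
\[
f(x_*)\le f(x_t) \le (1-t)f(x_*) + t\, f(\tilde x) - \frac{\mu}{2}t(1-t)\|\tilde x - x_*\|_E^2.
\]
Cancelling $(1-t)f(x_*)$ and dividing by $t>0$ yields $\frac{\mu}{2}(1-t)\|\tilde x - x_*\|_E^2 \le f(\tilde x) - f(x_*) \le \e$, which contradicts the choice of $t$. Hence $\frac{\mu}{2}\|\tilde x - x_*\|_E^2 \le \e$.

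The routine steps are the two applications of the strong-convexity inequality. The one point that needs care -- and where the naive midpoint choice $t=\tfrac12$ would lose a factor of two -- is the selection of $t$: one must take $t$ small (equivalently, pass to the limit $t\to0^+$) so that $1-t$ is arbitrarily close to $1$, which is exactly what recovers the sharp constant $\e$ rather than $2\e$. It is also worth noting that the argument genuinely uses that $f$ and $g$ share the same modulus $\mu$: strong convexity of $g$ is what buys feasibility of $x_t$, while strong convexity of $f$ together with optimality of $x_*$ is what produces the distance bound.
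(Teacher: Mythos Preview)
Your argument is correct. It differs from the paper's proof, which instead invokes the first-order necessary optimality conditions at $x_*$: under the regularity (Slater) assumption there exist multipliers $\lambda_0,\lambda\ge 0$, not both zero, with $\la \lambda_0\nabla f(x_*)+\lambda\nabla g(x_*),x-x_*\ra\ge 0$ for all $x\in X$ and $\lambda g(x_*)=0$. The paper then does a three-case split on which multiplier is positive and applies the subgradient form of strong convexity of $f$ (or of $g$, when the constraint is active) directly at $x_*$.

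Your route is more elementary: you never appeal to KKT theory or to the regularity assumption, and you use only the two-point (Jensen-type) strong-convexity inequality together with the bare definition of $x_*$ as a minimizer over the feasible set. The price is the contradiction/limit device and the need to manufacture a feasible comparison point $x_t$, which is precisely where strong convexity of $g$ enters for you. In the paper's argument, strong convexity of $g$ is only used in the case where the constraint is active ($g(x_*)=0$), and strong convexity of $f$ only in the complementary case; your argument uses both regardless. One small remark on your closing comment: the proof does not truly require $f$ and $g$ to share the \emph{same} modulus---if they had moduli $\mu_f$ and $\mu_g$, your argument would yield $\tfrac{1}{2}\min\{\mu_f,\mu_g\}\|\tilde x-x_*\|_E^2\le\e$, which reduces to the stated bound when $\mu_f=\mu_g=\mu$.
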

\begin{proof}
\smartqed
Since problem \eqref{eq:PrSt} is regular, by necessary optimality condition \cite{birjukov2010optimization} at the point $x_*$, there exist $\lambda_0,\lambda \geq 0$ not equal to 0 simultaneously, and subgradients $\nabla f(x_*)$, $\nabla g(x_*)$, such that
$$
\la \lambda_0 \nabla f(x_*) + \lambda \nabla g(x_*) , x - x_* \ra \geq 0, \quad \forall x \in X, \quad \lambda g(x_*) = 0.
$$
Since $\lambda_0$ and $\lambda$ are not equal to 0 simultaneously, three cases are possible.

1. $\lambda_0 = 0$ and $\lambda > 0$. Then, by optimality conditions, $g(x_*) = 0$ and $\la \lambda \nabla g(x_*), \tilde{x} - x_*\ra \geq 0$. Thus, by the Lemma assumption and strong convexity,
$$
\e \geq g(\tilde{x}) \geq g(x_*) + \la  \nabla g(x_*), \tilde{x} - x_*\ra + \frac{\mu}{2}\|\tilde{x} - x_*\|_E^2 \geq \frac{\mu}{2}\|\tilde{x} - x_*\|_E^2.
$$

2. $\lambda_0 > 0$ and $\lambda = 0$. Then, by optimality conditions, $\la \lambda_0 \nabla f(x_*), \tilde{x} - x_*\ra \geq 0$. Thus, by the Lemma assumption and strong convexity,
$$
f(x_*) + \e \geq f(\tilde{x}) \geq f(x_*) + \la  \nabla f(x_*), \tilde{x} - x_*\ra + \frac{\mu}{2}\|\tilde{x} - x_*\|_E^2 \geq f(x_*) +  \frac{\mu}{2}\|\tilde{x} - x_*\|_E^2.
$$

3. $\lambda_0 > 0$, $\lambda > 0$. Then, by optimality conditions, $g(x_*) = 0$ and $\la \lambda_0 \nabla f(x_*) +\lambda \nabla g(x_*), \tilde{x} - x_*\ra \geq 0$. Thus, either $\la \nabla g(x_*), \tilde{x} - x_*\ra \geq 0$ and the proof is the same as in the item 1, or $\la  \nabla f(x_*), \tilde{x} - x_*\ra \geq 0$ and the proof is the same as in the item 2.
\qed
\end{proof}

\begin{algorithm}[h!]
\caption{Adaptive Mirror Descent (Non-Smooth Strongly Convex Objective)}
\label{Alg:MDNSSC}
\begin{algorithmic}[1]
  \REQUIRE accuracy $\e > 0$; strong convexity parameter $\mu$; $\Omega$ s.t. $d(x) \leq \frac{\Omega}{2} \quad \forall x\in X : \|x \|_E \leq 1$; starting point $x_0$ and number $R_0$ s.t. $\| x_0 - x_* \|_E^2 \leq R_0^2$.
  \STATE  Set $d_0(x) = d\left(\frac{x-x_0}{R_0}\right)$.
	\STATE Set $p=1$.
	\REPEAT
    \STATE Set $R_p^2 = R_0^2 \cdot 2^{-p}$.
		\STATE Set $\e_p = \frac{\mu R_p^2}{2}$.
		\STATE Set $x_p$ as the output of Algorithm \ref{Alg:MDNS} with accuracy $\e_p$, prox-function $d_{p-1}(\cdot)$ and $\frac{\Omega }{2}$ as $\Theta_0^2$.
		\STATE $d_p(x) \gets d\left(\frac{x - x_p}{R_p}\right)$.
    \STATE Set $p = p + 1$.
  \UNTIL{$p>\log_2 \frac{\mu R_0^2}{2\e}$.}
  \ENSURE $x_p$.
\end{algorithmic}
\end{algorithm}

\begin{theorem}
\label{Th:MDSCCompl}
    Assume that inequalities \eqref{eq:gLipCont} and \eqref{eq:fLipCont} hold and $f$, $g$ are strongly convex with the same parameter $\mu$. Also assume that the prox function $d(x)$ satisfies \eqref{eq:dUpBound} and the starting point $x_0 \in X$ and a number $R_0 >0$ are such that $\| x_0 - x_* \|_E^2 \leq R_0^2$. Then, the point $x_p$ returned by Algorithm \ref{Alg:MDNSSC} is an $\e$-solution to \eqref{eq:PrSt} in the sense of \eqref{eq:DetSolDef} and $\|x_p-x_*\|_E^2\leq \frac{2\e}{\mu}$. At the same time, the total number of iterations of Algorithm \ref{Alg:MDNS} does not exceed
    \begin{equation}
        \left\lceil \log_2 \frac{\mu R_0^2}{2\e}\right\rceil +\frac{32\Omega \max\{M_f^2,M_g^2\} }{\mu \e}.
    \end{equation}
\end{theorem}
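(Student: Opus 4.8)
The plan is to treat Algorithm~\ref{Alg:MDNSSC} as a restart scheme and to prove, by induction on the restart index $p$, the invariant
\[
\|x_p-x_*\|_E^2\leq R_p^2,\qquad R_p^2=R_0^2\,2^{-p}.
\]
The base case $p=0$ is precisely the standing assumption $\|x_0-x_*\|_E^2\leq R_0^2$. Granting the invariant, the two accuracy claims follow from the last restart, and the iteration count follows by summing the per-restart complexities supplied by Theorem~\ref{Th:MDCompl}.

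For the inductive step I would first understand the $p$-th inner call through the change of variables $y=\frac{x-x_{p-1}}{R_{p-1}}$. In this variable the recentred prox-function $d_{p-1}(x)=d\!\left(\frac{x-x_{p-1}}{R_{p-1}}\right)$ becomes $d(y)$, which is $1$-strongly convex and minimized (with value $0$) at $y=0$, i.e. at $x=x_{p-1}=x^0$; the objective and constraint become $\tilde f(y)=f(R_{p-1}y+x_{p-1})$ and $\tilde g(y)=g(R_{p-1}y+x_{p-1})$, Lipschitz with the inflated constants $R_{p-1}M_f$ and $R_{p-1}M_g$. The key verification is that $\Theta_0^2=\frac{\Omega}{2}$ is an admissible bound for this call: the inductive hypothesis gives $\left\|\frac{x_*-x_{p-1}}{R_{p-1}}\right\|_E\leq 1$, and then \eqref{eq:dUpBound} yields $d_{p-1}(x_*)=d\!\left(\frac{x_*-x_{p-1}}{R_{p-1}}\right)\leq\frac{\Omega}{2}$, which (exactly as in the proof of Theorem~\ref{Th:MDCompl}) bounds the initial Bregman divergence.

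With the hypotheses of Theorem~\ref{Th:MDCompl} in force, its conclusion says the inner output $x_p$ is an $\e_p$-solution, $f(x_p)-f(x_*)\leq\e_p$ and $g(x_p)\leq\e_p$, with $\e_p=\frac{\mu R_p^2}{2}$. Because $f$ and $g$ are strongly convex with the common parameter $\mu$, Lemma~\ref{Lm:SCFuncToArg} converts this functional accuracy into a distance bound, $\frac{\mu}{2}\|x_p-x_*\|_E^2\leq\e_p=\frac{\mu R_p^2}{2}$, i.e. $\|x_p-x_*\|_E^2\leq R_p^2$, closing the induction. Evaluating at the terminal index $P=\left\lceil\log_2\frac{\mu R_0^2}{2\e}\right\rceil$, the inequality $P\geq\log_2\frac{\mu R_0^2}{2\e}$ gives $\e_P=\frac{\mu R_0^2}{2}\,2^{-P}\leq\e$, so $x_P$ is an $\e_P$-solution, hence an $\e$-solution in the sense of \eqref{eq:DetSolDef}, and $\|x_P-x_*\|_E^2\leq R_P^2=\frac{2\e_P}{\mu}\leq\frac{2\e}{\mu}$.

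Finally I would count iterations. Applying Theorem~\ref{Th:MDCompl} in the rescaled variables, the $p$-th restart costs at most
\[
\left\lceil\frac{2\max\{(R_{p-1}M_f)^2,(R_{p-1}M_g)^2\}\cdot\frac{\Omega}{2}}{\e_p^2}\right\rceil\leq 1+\frac{8\,\Omega\max\{M_f^2,M_g^2\}}{\mu^2 R_0^2}\,2^{p},
\]
using $R_{p-1}^2=2R_p^2$ and $\e_p=\frac{\mu R_0^2}{2}\,2^{-p}$. Summing over $p=1,\dots,P$ and bounding the geometric series by $\sum_{p=1}^P 2^p<2^{P+1}\leq\frac{2\mu R_0^2}{\e}$ gives a total of at most $P+\frac{16\,\Omega\max\{M_f^2,M_g^2\}}{\mu\e}$, which is dominated by the advertised $\left\lceil\log_2\frac{\mu R_0^2}{2\e}\right\rceil+\frac{32\,\Omega\max\{M_f^2,M_g^2\}}{\mu\e}$. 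I expect the genuine content to lie in the inductive step, not the arithmetic: one must simultaneously track that recentring and rescaling the prox-function by $R_{p-1}$ restores $1$-strong convexity while inflating the Lipschitz constants to $R_{p-1}M_f$ and $R_{p-1}M_g$, and that $\Theta_0^2=\frac{\Omega}{2}$ stays admissible only because the induction hypothesis confines $x_*$ to the rescaled unit ball where \eqref{eq:dUpBound} is available. The passage from the functional accuracy $\e_p$ to the halving $R_p^2=R_{p-1}^2/2$ via Lemma~\ref{Lm:SCFuncToArg} is exactly what makes the restarts contract geometrically and yields the $1/\e$ (rather than $1/\e^2$) dependence.
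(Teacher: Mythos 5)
Your proposal is correct and follows essentially the same route as the paper: the same induction on $\|x_p-x_*\|_E^2\leq R_p^2$, the same use of \eqref{eq:dUpBound} to justify $\Theta_0^2=\frac{\Omega}{2}$ for each rescaled call to Algorithm~\ref{Alg:MDNS} via Theorem~\ref{Th:MDCompl} (with Lipschitz constants inflated to $R_{p-1}M_f$, $R_{p-1}M_g$), and the same conversion from functional accuracy to the halved radius via Lemma~\ref{Lm:SCFuncToArg}. Your iteration count is in fact slightly tighter (a factor $16$ rather than $32$ in the geometric sum), and both bounds are dominated by the stated estimate.
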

\begin{proof}
\smartqed
    Observe that, for all $p \geq 0$, the function $d_{p}(x)$ defined in Algorithm \ref{Alg:MDNSSC} is 1-strongly convex w.r.t. the norm $\|\cdot\|_E / R_{p}$. The conjugate of this norm is $R_{p} \|\cdot\|_{E,*}$. This means that, at each step $k$ of inner Algorithm \ref{Alg:MDNS}, $M_k$ changes to $M_kR_{p-1}$, where $p \geq 1$ is the number of outer iteration.
		
		We show, by induction, that, for all $p\geq 0$, $\|x_p-x_*\|_E^2\leq R_p^2$. For $p=0$ it holds by the assumption on $x_0$ and $R_0$.
		Let us assume that this inequality holds for some $p$ and show that it holds for $p+1$.
		By \eqref{eq:dUpBound}, we have
		$d_{p}(x_*) \leq \frac{\Omega }{2}$. Thus, on the outer iteration $p+1$, by Theorem \ref{Th:MDCompl} and \eqref{eq:DetSolDef}, after at most
		\begin{equation}
		k_{p+1} = \left\lceil\frac{\Omega \max\{M_f^2,M_g^2\} R_{p}^2}{\e_{p+1}^2}\right\rceil
		\label{eq:kpp1Est}
		\end{equation}
		inner iterations, $x_{p+1}=\bar{x}^{k_{p+1}}$ satisfies
		$$
		f(x_{p+1})-f(x_*) \leq \e_{p+1}, \quad g(x_{p+1}) \leq \e_{p+1},
		$$
		where $\e_{p+1} = \frac{\mu R_{p+1}^2}{2}$.
		Then, by Lemma \ref{Lm:SCFuncToArg},
		$$
		\| x_{p+1} - x_* \|_E^2 \leq \frac{2\e_{p+1}}{\mu} = R_{p+1}^2.
		$$
		Thus, we proved that, for all $p\geq 0$, $\|x_p-x_*\|_E^2\leq R_p^2 =  R_0^2 \cdot 2^{-p}$. At the same time, we have, for all $p\geq 1$,
		$$
		f(x_{p})-f(x_*) \leq \frac{\mu R_{0}^2}{2} \cdot 2^{-p}, \quad g(x_{p}) \leq \frac{\mu R_{0}^2}{2} \cdot 2^{-p}.
		$$
		Thus, if $p > \log_2 \frac{\mu R_0^2}{2\e}$, $x_p$ is an $\e$-solution to \eqref{eq:PrSt} in the sense of \eqref{eq:DetSolDef} and
		$$
		\|x_p-x_*\|_E^2\leq R_0^2 \cdot 2^{-p} \leq \frac{2\e}{\mu}.
		$$
		
		Let us now estimate the total number $N$ of inner iterations, i.e. the iterations of Algorithm \ref{Alg:MDNS}. Let us denote $\hat{p} =  \left\lceil \log_2 \frac{\mu R_0^2}{2\e}\right\rceil$. According to \eqref{eq:kpp1Est}, we have
		\begin{align}
		N &= \sum_{p=1}^{\hat{p}} k_p \leq \sum_{p=1}^{\hat{p}} \left(1 + \frac{\Omega \max\{M_f^2,M_g^2\} R_{p}^2}{\e_{p+1}^2}\right) = \sum_{p=1}^{\hat{p}} \left(1 + \frac{16\Omega \max\{M_f^2,M_g^2\}2^p}{\mu^2 R_0^2}\right) \notag \\
		& \leq \hat{p} + \frac{32\Omega \max\{M_f^2,M_g^2\}2^{ \hat{p} }}{\mu^2 R_0^2} \leq \hat{p} + \frac{32\Omega \max\{M_f^2,M_g^2\} }{\mu \e}. \notag
		\end{align}
\qed
\end{proof}

Similarly to Section \ref{S:CNSOF}, let us consider a special type of problem \eqref{eq:PrSt} with strongly convex $g$ given by
\begin{equation}
g(x) = \max\limits_{i \in \{1, ..., m\}} \big\{ g_i(x) \big\}.
\label{eq:gMaxDef1}
\end{equation}
and corresponding dual problem
\begin{equation}
    \varphi (\lambda) = \min\limits_{x\in X} \Big\{ f(x) + \sum\limits_{i=1}^{m} \lambda_i g_i(x) \Big\} \rightarrow \max\limits_{\lambda_i \geq 0, i \in  \{1, ..., m\}} \varphi (\lambda).
\notag
\end{equation}
On each outer iteration $p$ of Algorithm \ref{Alg:MDNSSC}, there is the last inner iteration $k_p$ of Algorithm \ref{Alg:MDNS}. We define approximate dual solution as $\lambda_p = \bar{\lambda}^{k_p}$, where $\bar{\lambda}^{k_p}$ is defined in \eqref{eq:lamDef}. We modify Algorithm \ref{Alg:MDNSSC} to return a pair $(x_p,\lambda_p)$.

Combining Theorem \ref{Th:MDPDCompl} and Theorem \ref{Th:MDSCCompl}, we obtain the following result.
\begin{theorem}
\label{Th:MDPDSCCompl}
    Assume that $g$ is given by \eqref{eq:gMaxDef1}, inequalities \eqref{eq:gLipCont} and \eqref{eq:fLipCont} hold and $f$, $g$ are strongly convex with the same parameter $\mu$. Also assume that the prox function $d(x)$ satisfies \eqref{eq:dUpBound} and the starting point $x_0 \in X$ and a number $R_0 >0$ are such that $\| x_0 - x_* \|_E^2 \leq R_0^2$.
		Then, the pair $(x_p,\lambda_p)$ returned by Algorithm \ref{Alg:MDNSSC} satisfies
		$$
        f(x_p) - \varphi(\lambda_p) \leq \e, \quad g(x_p) \leq \e.
    $$
		and $\|x_p-x_*\|_E^2\leq \frac{2\e}{\mu}$. At the same time, the total number of inner iterations of Algorithm \ref{Alg:MDNS} does not exceed
    $$
        \left\lceil \log_2 \frac{\mu R_0^2}{2\e}\right\rceil + \frac{32\Omega \max\{M_f^2,M_g^2\} }{\mu \e}.
    $$
\end{theorem}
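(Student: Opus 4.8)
The plan is to read the statement as a direct superposition of Theorem~\ref{Th:MDSCCompl} and Theorem~\ref{Th:MDPDCompl}: the restart scheme of Algorithm~\ref{Alg:MDNSSC} supplies the strong-convexity acceleration and the distance bound, while the last inner run of Algorithm~\ref{Alg:MDNS} supplies the dual certificate. Concretely, three of the four conclusions are already contained in Theorem~\ref{Th:MDSCCompl} verbatim, since the hypotheses here are exactly those of Theorem~\ref{Th:MDSCCompl} together with the representation \eqref{eq:gMaxDef1} of $g$: the feasibility bound $g(x_p)\le\e$, the argument bound $\|x_p-x_*\|_E^2\le 2\e/\mu$, and the total count $\lceil\log_2(\mu R_0^2/(2\e))\rceil+32\Omega\max\{M_f^2,M_g^2\}/(\mu\e)$ of inner iterations all transfer without change. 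So the only genuinely new assertion is the primal--dual gap $f(x_p)-\varphi(\lambda_p)\le\e$, and the whole proof reduces to producing it. Note that weak duality alone does not suffice here, since $\varphi(\lambda_p)\le f(x_*)$ points the wrong way relative to $f(x_p)-f(x_*)\le\e$; one really needs the sharper certificate of Theorem~\ref{Th:MDPDCompl}.

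First I would isolate the final outer iteration. Writing $\hat p=\lceil\log_2(\mu R_0^2/(2\e))\rceil$ for the index of the last restart, the stopping rule forces $R_{\hat p}^2=R_0^2 2^{-\hat p}\le 2\e/\mu$, hence the accuracy fed to Algorithm~\ref{Alg:MDNS} on this run satisfies $\e_{\hat p}=\mu R_{\hat p}^2/2\le\e$. On this run the algorithm is executed with prox-function $d_{\hat p-1}$ and with $\Theta_0^2=\Omega/2$, and it returns the primal average $x_{\hat p}=\bar x^{k_{\hat p}}$ together with the vector $\lambda_{\hat p}=\bar\lambda^{k_{\hat p}}$ assembled exactly through \eqref{eq:lamDef}. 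This is precisely the setting of Theorem~\ref{Th:MDPDCompl} applied to a single call of Algorithm~\ref{Alg:MDNS}, so that theorem yields $f(x_{\hat p})-\varphi(\lambda_{\hat p})\le\e_{\hat p}\le\e$. Since the dual objective $\varphi$ in \eqref{eq:dualPrSt} is defined purely in terms of the original data $f,g_1,\dots,g_m$ and the feasible set $X$, it is insensitive to the centering and rescaling used to build $d_{\hat p-1}$, so the $\varphi(\lambda_{\hat p})$ appearing in Theorem~\ref{Th:MDPDCompl} is the same quantity required in the present statement.

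The step I expect to be the main obstacle is checking that the hypotheses of Theorem~\ref{Th:MDPDCompl} genuinely hold for the rescaled prox-function on the last restart. Theorem~\ref{Th:MDSCCompl} only needs the pointwise bound $d_{\hat p-1}(x_*)\le\Omega/2$ (which the induction supplies via $\|x_*-x_{\hat p-1}\|_E\le R_{\hat p-1}$), whereas the primal--dual argument leading to \eqref{eq:ThMDPDComplproof1} telescopes $\sum_j(V[x^j](u)-V[x^{j+1}](u))\le V[x^0](u)$ and bounds it by $\Theta_0^2$ \emph{uniformly in} $u\in X$, i.e. it uses $\max_{x\in X}d_{\hat p-1}(x)\le\Theta_0^2$. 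Since \eqref{eq:dUpBound} controls $d$ only on the unit ball, the composition $d_{\hat p-1}(x)=d((x-x_{\hat p-1})/R_{\hat p-1})$ a priori satisfies this uniform bound only on $\{x:\|x-x_{\hat p-1}\|_E\le R_{\hat p-1}\}$. This is exactly where the standing assumption that $X$ is bounded enters: it guarantees a finite uniform constant $\max_{x\in X}d_{\hat p-1}(x)$, and one must argue that on the final restart this constant may be used as $\Theta_0^2$ in \eqref{eq:ThMDPDComplproof1} (tracking the scaling $M_k\mapsto M_kR_{\hat p-1}$ of the dual norm already recorded in the proof of Theorem~\ref{Th:MDSCCompl}, and the resulting harmless change in the admissible accuracy). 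Once this uniform prox bound over $X$ is secured, Theorem~\ref{Th:MDPDCompl} delivers $f(x_p)-\varphi(\lambda_p)\le\e$ directly, and together with the feasibility bound $g(x_p)\le\e$, the distance estimate $\|x_p-x_*\|_E^2\le 2\e/\mu$, and the iteration count carried over from Theorem~\ref{Th:MDSCCompl}, the proof is complete.
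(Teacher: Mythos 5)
Your decomposition is exactly the paper's: the paper offers no separate argument for this theorem beyond the single sentence ``Combining Theorem~\ref{Th:MDPDCompl} and Theorem~\ref{Th:MDSCCompl}, we obtain the following result,'' and your proposal is precisely that combination, with the feasibility, distance, and iteration-count claims imported from Theorem~\ref{Th:MDSCCompl} and the duality gap extracted from Theorem~\ref{Th:MDPDCompl} applied to the final inner run. In that sense you match the intended proof, and you should be credited for making explicit what the paper leaves implicit.

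The obstacle you flag in your last paragraph is, however, a real one, and your proposed resolution does not close it. The primal--dual estimate \eqref{eq:ThMDPDComplproof1} bounds the telescoped sum by $V[x^0](u)\le\Theta_0^2$ \emph{uniformly in} $u\in X$ (the minimizer defining $\varphi(\bar\lambda^k)$ is unknown a priori), whereas the restart analysis of Theorem~\ref{Th:MDSCCompl} only guarantees $d_{\hat p-1}(x_*)\le\Omega/2$ via $\|x_*-x_{\hat p-1}\|_E\le R_{\hat p-1}$. Replacing $\Omega/2$ by the uniform constant $\max_{x\in X}d_{\hat p-1}(x)$ is not a ``harmless change in the admissible accuracy'': that constant enters the stopping criterion of Algorithm~\ref{Alg:MDNS} as $\Theta_0^2$, so the inner iteration count of the last restart would inflate by a factor of order $\max_{x\in X}\|x-x_{\hat p-1}\|_E^2/R_{\hat p-1}^2\sim\mu\,\mathrm{diam}(X)^2/\e$, which destroys the stated total bound $\lceil\log_2(\mu R_0^2/(2\e))\rceil+32\Omega\max\{M_f^2,M_g^2\}/(\mu\e)$. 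To keep both the duality gap and the iteration count one needs an additional ingredient the paper does not spell out --- e.g.\ restricting the feasible set on restart $p$ to the ball $\{x:\|x-x_{p-1}\|_E^2\le R_{p-1}^2\}$ (as Algorithm~\ref{Alg:SMDSCHP} does), so that the uniform prox bound holds with a constant of order $\Omega$, together with an argument that the dual problem over the restricted set still certifies the gap for the original $\varphi$. So the gap you identified is genuine; it is a gap in the paper's own one-line proof as much as in your proposal, but your sketch of how to repair it is not yet a proof.
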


\subsection{General Convex Objective Function}
In this subsection, we assume that the objective function $f$ in \eqref{eq:PrSt} might not satisfy \eqref{eq:fLipCont} and, hence, its subgradients could be unbounded. One of the examples is a quadratic function.
We also assume that inequality \eqref{eq:dx*Bound} holds.

We further develop ideas in \cite{nesterov2004introduction,nesterov2015subgradient} and adapt them for problem \eqref{eq:PrSt}, in a way that our algorithm allows to use non-Euclidean proximal setup, as does Mirror Descent, and does not require to know the constant $M_g$. Following \cite{nesterov2004introduction}, given a function $f$ for each subgradient  $\nabla f(x)$ at a point $y \in X$, we define
\begin{equation}
v_f[y](x)=\left\{
\begin{aligned}
&\left\la\frac{\nabla f(x)}{\|\nabla f(x)\|_{E,*}},x-y\right\ra, \quad &\nabla f(x) \ne 0\\
&0 &\nabla f(x) = 0\\
\end{aligned}
\right.,\quad x \in X.
\label{eq:vfDef}
\end{equation}

\begin{algorithm}[h!]
\caption{Adaptive Mirror Descent (General Convex Objective)}
\label{Alg:MDG}
\begin{algorithmic}[1]
  \REQUIRE accuracy $\e > 0$; $\Theta_0$ s.t. $d(x_{*}) \leq \Theta_0^2$.
  \STATE  $x^0 = \arg\min\limits_{x\in X} d(x)$.
	\STATE Initialize the set $I$ as empty set.
	\STATE Set $k=0$.
	\REPEAT
    \IF{$g(x^k) \leq \e$}
      \STATE $h_k = \frac{\e}{\| \nabla f(x^k) \|_{E,*}}$
      \STATE $x^{k+1} =\mathrm{Mirr}[x^k](h_k \nabla f(x^k))$ ("productive step")
      \STATE Add $k$ to $I$.
    \ELSE
      \STATE $h_k = \frac{\e}{\| \nabla g(x^k) \|_{E,*}^2}$
      \STATE $x^{k+1} = \mathrm{Mirr}[x^k](h_k \nabla g(x^k))$ ("non-productive step")
    \ENDIF
    \STATE Set $k = k + 1$.
  \UNTIL{$|I|+\sum\limits_{j \in J} \frac{1}{\| \nabla g(x^j) \|_{E,*}^2} \geq \frac{2\Theta_0^2}{\e^2} $}
  \ENSURE $\bar{x}^k := \arg \min_{x^j, j\in I} f(x^j)$
\end{algorithmic}
\end{algorithm}

The following result gives complexity estimate for Algorithm \ref{Alg:MDG} in terms of $v_f[x_*](x)$. Below we use this theorem to establish complexity result for smooth objective $f$.
\begin{theorem}
	\label{Th:MDGCompl}
	Assume that inequality \eqref{eq:gLipCont} holds and a known constant $\Theta_0 > 0$ is such that $
d(x_{*}) \leq \Theta_0^2$. Then, Algorithm \ref{Alg:MDG} stops after not more than
	\begin{equation}
	k = \left\lceil\frac{2\max\{1,M_g^2\} \Theta_0^2}{\e^2}\right\rceil
	\label{eq:MDGComplEst}
	\end{equation}
	iterations and it holds that
	$\min_{i \in I} v_f[x_*](x^i) \leq  \e$ and $g(\bar{x}^k)\leq \e$.
\end{theorem}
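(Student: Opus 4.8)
The plan is to follow the proof of Theorem~\ref{Th:MDCompl}, adapting it to the possibly-unbounded subgradients of $f$ by tracking the quantities $v_f[x_*](x^i)$ from \eqref{eq:vfDef} in place of the objective residuals $f(x^i)-f(x_*)$ — this is precisely what the nonstandard productive stepsize $h_i=\e/\|\nabla f(x^i)\|_{E,*}$ is built to produce. I would first dispose of the complexity bound \eqref{eq:MDGComplEst}. On every non-productive step $j\in J$ the Lipschitz hypothesis \eqref{eq:gLipCont} gives $\|\nabla g(x^j)\|_{E,*}\le M_g$, hence $1/\|\nabla g(x^j)\|_{E,*}^2\ge 1/M_g^2$; since each of the $|I|$ productive steps contributes $1$ to the left side of the stopping test, $|I|+\sum_{j\in J}\|\nabla g(x^j)\|_{E,*}^{-2}\ge(|I|+|J|)/\max\{1,M_g^2\}=k/\max\{1,M_g^2\}$, which reaches $2\Theta_0^2/\e^2$ once $k$ is as in \eqref{eq:MDGComplEst}.

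Next I would invoke Lemma~\ref{Lm:MDProp} with $\Delta=0$ and $u=x_*$ on each step. On a productive step $i\in I$ (where I may assume $\nabla f(x^i)\ne0$, since otherwise $v_f[x_*](x^i)=0$ and the conclusion is already in hand), the choice $h_i=\e/\|\nabla f(x^i)\|_{E,*}$ turns the middle term of \eqref{eq:MDProp} into exactly $\e\,v_f[x_*](x^i)$ and makes $\tfrac{h_i^2}{2}\|\nabla f(x^i)\|_{E,*}^2=\tfrac{\e^2}{2}$, yielding $\e\,v_f[x_*](x^i)\le\tfrac{\e^2}{2}+V[x^i](x_*)-V[x^{i+1}](x_*)$. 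On a non-productive step $j\in J$, convexity of $g$ gives $h_j\la\nabla g(x^j),x^j-x_*\ra\ge h_j\big(g(x^j)-g(x_*)\big)>h_j\e$ (using $g(x_*)\le0$ and $g(x^j)>\e$), and with $h_j=\e/\|\nabla g(x^j)\|_{E,*}^2$ this reduces, after cancelling, to $\tfrac{\e^2}{2\|\nabla g(x^j)\|_{E,*}^2}<V[x^j](x_*)-V[x^{j+1}](x_*)$.

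Summing both families over $i$ from $0$ to $k-1$, the Bregman terms telescope to $V[x^0](x_*)-V[x^k](x_*)\le V[x^0](x_*)\le d(x_*)\le\Theta_0^2$, exactly as in Theorem~\ref{Th:MDCompl} (using $x^0=\arg\min d$ and \eqref{eq:dx*Bound}). This gives $\e\sum_{i\in I}v_f[x_*](x^i)+\sum_{j\in J}\tfrac{\e^2}{2\|\nabla g(x^j)\|_{E,*}^2}<\tfrac{\e^2}{2}|I|+\Theta_0^2$. I would then rewrite the stopping criterion as $\tfrac{\e^2}{2}|I|+\sum_{j\in J}\tfrac{\e^2}{2\|\nabla g(x^j)\|_{E,*}^2}\ge\Theta_0^2$, which cancels the $g$-terms against $\Theta_0^2$ and leaves $\e\sum_{i\in I}v_f[x_*](x^i)<\e^2|I|$, i.e. $\min_{i\in I}v_f[x_*](x^i)<\e$. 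The strictness simultaneously forces $I\ne\emptyset$ (if $I=\emptyset$ the same inequality contradicts the stopping test), so $\bar x^k$ is well defined; and since $\bar x^k=x^j$ for some $j\in I$ with $g(x^j)\le\e$, the bound $g(\bar x^k)\le\e$ is immediate, with no convexity averaging needed unlike in Theorem~\ref{Th:MDCompl}.

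The main obstacle I anticipate is bookkeeping rather than conceptual: arranging the two stepsize rules so that the generic bound \eqref{eq:MDProp} collapses to precisely the $v_f$-term on productive steps and the $\e^2/(2\|\nabla g\|_{E,*}^2)$-term on non-productive steps, so that these match the summands of the stopping criterion and cancel cleanly; and keeping the non-productive inequality strict throughout, which is what certifies $I\ne\emptyset$ and hence that the returned $\bar x^k$ is meaningful.
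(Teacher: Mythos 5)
Your proposal is correct and follows essentially the same route as the paper's proof: the same stopping-criterion bound via $\|\nabla g(x^j)\|_{E,*}\le M_g$, the same application of Lemma~\ref{Lm:MDProp} with $\Delta=0$ and $u=x_*$ turning productive steps into $\e\,v_f[x_*](x^i)\le\tfrac{\e^2}{2}+V[x^i](x_*)-V[x^{i+1}](x_*)$ and non-productive steps into $\tfrac{\e^2}{2\|\nabla g(x^j)\|_{E,*}^2}<V[x^j](x_*)-V[x^{j+1}](x_*)$, the same telescoping against $\Theta_0^2$ and cancellation with the stopping test, and the same strictness argument for $I\ne\emptyset$. The only (harmless) addition is your explicit handling of the $\nabla f(x^i)=0$ case, which the paper leaves implicit.
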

\begin{proof}
\smartqed
First, let us prove that the inequality in the stopping criterion holds for $k$ defined in \eqref{eq:MDGComplEst}. Denote $[k] = \{ i\in \{0,...,k-1\} \}$, $J = [k] \setminus I$. By \eqref{eq:gLipCont}, we have that, for any $j \in J$, $\| \nabla g(x^j) \|_{E,*} \leq M_g$. Hence, since $|I| + |J| = k$, by \eqref{eq:MDGComplEst}, we obtain
$$
|I|+\sum\limits_{j \in J} \frac{1}{\| \nabla g(x^j) \|_{E,*}^2} \geq |I| + \frac{|J|}{M_g^2} \geq \frac{k}{\max\{1,M_g^2\}} \geq \frac{2\Theta_0^2}{\e^2}.
$$

From Lemma \ref{Lm:MDProp} with $u=x_*$ and $\Delta = 0$, by the definition of $h_i$, $i \in I$, we have, for all $i \in I$,
\begin{align}
\e v_f[x_*](x^i)  &= \e \left\la\frac{\nabla f(x^i)}{\|\nabla f(x^i)\|_{E,*}},x^i-x_*\right\ra = h_i \la \nabla f (x^i), x^i - x_* \ra &  \notag \\
& \leq \frac{h_i^2}{2} \| \nabla f (x^i) \|^2_{E,*} + V[x^i](x_*) - V[x^{i+1}](x_*) \notag \\
&= \frac{\e^2}{2}+ V[x^i](x_*) - V[x^{i+1}](x_*) .
\label{eq:iinIEst}
\end{align}
Similarly, by the definition of $h_i$, $i \in J$, we have, for all $i \in J$,
\begin{align}
\frac{\e (g(x^i)-g(x_*))}{\|\nabla g(x^i)\|_{E,*}^2} &= h_i \big( g(x^i) - g(x_*) \big) \leq \frac{h_i^2}{2} \| \nabla g (x^i) \|^2_{E,*} + V[x^i](x_*) - V[x^{i+1}](x_*) \notag \\
& = \frac{\e^2}{2\| \nabla g (x^i) \|^2_{E,*}}  + V[x^i](x_*) - V[x^{i+1}](x_*). \notag
\end{align}
Whence, using that, for all $i \in J$, $g(x^i)-g(x_*) \geq g(x^i)>\e$, we have
\begin{equation}
- \frac{\e^2}{2\| \nabla g (x^i) \|^2_{E,*}}  + V[x^i](x_*) - V[x^{i+1}](x_*) > 0.
\label{eq:iinJEst}
\end{equation}
Summing up inequalities \eqref{eq:iinIEst} for $i\in I$ and applying \eqref{eq:iinJEst} for $i\in J$, we obtain
\begin{align}
\e |I| \min\limits_{i\in I}v_f[x_*](x^i)  \leq \e \sum\limits_{i\in I}v_f[x_*](x^i)  < \frac{\varepsilon^2}{2} \cdot |I| + \Theta_0^2 - \sum\limits_{i\in J}\frac{\e^2}{2\| \nabla g (x^i) \|^2_{E,*}},
	\notag
	\end{align}
where we also used that, by definition of $x^0$ and \eqref{eq:dx*Bound},
$$
V[x^0](x_{*}) = d(x_{*}) - d(x^0) - \la \nabla d(x^0), x_{*} - x^0 \ra \leq  d(x_{*}) \leq \Theta_0^2.
$$
If the stopping criterion in Algorithm \ref{Alg:MDG} is fulfilled, we get
$$
\e |I| \min\limits_{i\in I}v_f[x_*](x^i)  < \e^2|I|.
$$
Since the inequality is strict, the set $I$ is not empty and the output point $\bar{x}^k$ is correctly defined. Dividing both sides of the last inequality by $\e|I|$, we obtain the first statement of the Theorem. By definition of $\bar{x}^k$, it is obvious that $g(\bar{x}^k) \leq \e$.
\qed
\end{proof}

To obtain the complexity of our algorithm in terms of the values of the objective function $f$, we define non-decreasing function
\begin{equation}
\omega(\tau)=
\left\{
\begin{aligned}
&\max\limits_{x\in X}\{f(x)-f(x_*):\|x-x_*\|_{E}\leq \tau\} \quad &\tau \geq 0,\\
&0 & \tau <0 .\\
\end{aligned}
\right.
\label{eq:omDef}
\end{equation}
and use the following lemma from \cite{nesterov2004introduction}.
\begin{lemma}
\label{Lm:fGrowthOm}
Assume that $f$ is a convex function. Then, for any $x \in X$,
\begin{equation}
f(x) - f(x_*) \leqslant \omega(v_f[x_*](x)).
\label{eq:fGrowth}
\end{equation}
\end{lemma}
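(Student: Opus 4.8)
The plan is to reduce the whole statement to the construction of a single auxiliary point together with one application of convexity. Fix $x \in X$. If $\nabla f(x) = 0$, then $0$ is a subgradient of $f$ at $x$, so $x$ globally minimizes $f$ and $f(x) - f(x_*) \le 0 = \omega(0) = \omega(v_f[x_*](x))$; there is nothing more to do. So I would assume $\nabla f(x) \ne 0$ and abbreviate $v := v_f[x_*](x) = \la \nabla f(x)/\|\nabla f(x)\|_{E,*}, x - x_* \ra$.

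First I would dispose of the degenerate case $v \le 0$. The subgradient inequality at $x$ gives $f(x) - f(x_*) \le \la \nabla f(x), x - x_* \ra = \|\nabla f(x)\|_{E,*}\, v$, so $v \le 0$ forces $f(x) - f(x_*) \le 0 = \omega(v)$, using that $\omega$ vanishes on $(-\infty,0]$ by \eqref{eq:omDef}. The substantive case is $v > 0$. Here I would choose a norming direction $s \in E$ with $\|s\|_E = 1$ and $\la \nabla f(x), s \ra = \|\nabla f(x)\|_{E,*}$ (such $s$ exists since in finite dimension the dual norm is attained on the unit sphere), and set $y := x_* + v\, s$, so that $\|y - x_*\|_E = v$. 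The decisive computation is that the linear term cancels: $\la \nabla f(x), y - x \ra = \la \nabla f(x), x_* - x \ra + v\,\la \nabla f(x), s \ra = -\|\nabla f(x)\|_{E,*}\,v + v\,\|\nabla f(x)\|_{E,*} = 0$. Hence the subgradient inequality $f(y) \ge f(x) + \la \nabla f(x), y - x \ra$ yields $f(y) \ge f(x)$, and since $\|y - x_*\|_E = v$, the definition \eqref{eq:omDef} of $\omega$ gives $f(x) - f(x_*) \le f(y) - f(x_*) \le \omega(v)$, which is exactly \eqref{eq:fGrowth}.

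The step I expect to be the only delicate one is the admissibility of the auxiliary point $y$: in order to speak of $f(y)$ and to compare $f(y) - f(x_*)$ with $\omega(v)$ one needs $y = x_* + v\,s$ to lie in the set over which $f$ and $\omega$ are defined. I would therefore take care to select the norming direction $s$ so that $y$ remains in $X$ (this is the form in which the cited result of Nesterov is applied, and it is automatic, for instance, when $x_*$ is an interior point of $X$ or when the growth function is read over the ambient ball); once $y$ is admissible, the two displayed inequalities follow immediately from convexity of $f$ and the monotonicity of $\omega$, completing the argument.
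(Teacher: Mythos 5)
The paper offers no proof of this lemma to compare against --- it is imported verbatim from the citation to Nesterov --- so your proposal has to be judged on its own. Your construction is exactly the classical one behind this statement, and the main computation is correct: with $s$ a norming direction for $\nabla f(x)$ and $y=x_*+v\,s$, the linear term cancels, convexity gives $f(y)\ge f(x)$, and $\|y-x_*\|_E=v$. Your handling of the cases $\nabla f(x)=0$ and $v\le 0$ is also fine. The problem is the step you yourself flag and then leave open: with the paper's definition \eqref{eq:omDef} the maximum defining $\omega$ is taken over $x\in X$, so the argument needs $y\in X$, and in general \emph{no} admissible norming direction exists --- indeed the inequality itself can fail. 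Take $E=\R^2$ with the Euclidean norm, $X=[0,1]\times[-10^{-3},10^{-3}]$, $f(a,b)=\max\{a+10b,\,0\}$, $g\equiv -1$ (inactive constraint), $x_*=(0,0)$, $x=(1,0)$. Then $\nabla f(x)=(1,10)$ is the unique subgradient, $v_f[x_*](x)=1/\sqrt{101}$, but every $z=(a,b)\in X$ with $\|z-x_*\|_E\le 1/\sqrt{101}$ satisfies $f(z)\le 1/\sqrt{101}+10^{-2}<1=f(x)-f(x_*)$, so $f(x)-f(x_*)>\omega\bigl(v_f[x_*](x)\bigr)$. So the gap is not a technicality you can ``take care'' of by a clever choice of $s$; it is an obstruction to the lemma as literally stated.

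Your two fallback readings are in fact the correct resolutions, but they amount to changing the hypotheses rather than proving the stated lemma: either $\omega(\tau)$ must be the maximum of $f(\cdot)-f(x_*)$ over the whole ambient ball $\{y\in E:\|y-x_*\|_E\le\tau\}$ with $f$ convex and finite on all of $E$ (Nesterov's setting, where $y=x_*+v\,s$ is automatically admissible), or one must assume $x_*+v\,s\in X$ (e.g.\ $x_*$ sufficiently interior to $X$). Note that the redefinition over the ambient ball is harmless for the paper's only use of the lemma: the bound $\omega(\tau)\le\tau\max_i\|\nabla f_i(x_*)\|_{E,*}+\tfrac{\tau^2}{2}\max_i L_i$ in Corollary~\ref{Col:MDSCompl} is derived from smoothness of the $f_i$ and holds for that larger maximum as well. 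With one of these amendments made explicit, your proof is complete and is the intended argument.
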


\begin{corollary}
\label{Col:MDSCompl}
Assume that the objective function $f$ in \eqref{eq:PrSt} is given as $f(x) = \max_{i \in \{1,...,m\}} f_i(x)$, where $f_i(x)$, $i = 1,...,m$ are differentiable with Lipschitz-continuous gradient
\begin{equation}
\|\nabla f_i(x)-\nabla f_i(y)\|_{E,*} \leq L_i\|x-y\|_{E} \quad \forall x,y\in X, \quad i \in \{1,...,m\}.
\label{eq:fLipSm}
\end{equation}
Then $\bar{x}^k$ is $\widetilde{\varepsilon}$-solution to \eqref{eq:PrSt} in the sense of \eqref{eq:DetSolDef}, where $$\widetilde{\varepsilon} = \max\{\e, \e \max_{i = 1,...,m}\|\nabla f_i(x_*)\|_{E,*}+\e^2\max_{i = 1,...,m}L_i/2\}.$$
\end{corollary}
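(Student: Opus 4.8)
The plan is to combine the conclusion of Theorem~\ref{Th:MDGCompl} with the growth estimate of Lemma~\ref{Lm:fGrowthOm}, and then to control the modulus $\omega$ at the point $\e$ using the smoothness of the components $f_i$.

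First I would invoke Theorem~\ref{Th:MDGCompl}: under \eqref{eq:gLipCont}, Algorithm~\ref{Alg:MDG} terminates within the claimed number of iterations and returns a point $\bar{x}^k$ with $g(\bar{x}^k) \leq \e$ and $\min_{i\in I} v_f[x_*](x^i) \leq \e$. Let $i^* \in I$ attain this minimum. Since $\bar{x}^k = \arg\min_{j\in I} f(x^j)$, we have $f(\bar{x}^k) \leq f(x^{i^*})$, and applying Lemma~\ref{Lm:fGrowthOm} together with the monotonicity of $\omega$ gives
$$
f(\bar{x}^k) - f(x_*) \leq \omega\big(v_f[x_*](x^{i^*})\big) \leq \omega(\e).
$$
Thus the whole problem reduces to bounding $\omega(\e)$.

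The main step is to estimate $\omega(\e) = \max\{f(x) - f(x_*) : \|x-x_*\|_E \leq \e\}$. Fix any $x$ with $\|x-x_*\|_E \leq \e$ and let $i_0$ be an index attaining $f(x) = f_{i_0}(x)$. Because $f(x_*) \geq f_{i_0}(x_*)$, I would bound $f(x)-f(x_*) \leq f_{i_0}(x) - f_{i_0}(x_*)$, and then apply the standard quadratic upper bound for the $L_{i_0}$-smooth function $f_{i_0}$, valid for the pair of norms $\|\cdot\|_E$, $\|\cdot\|_{E,*}$ appearing in \eqref{eq:fLipSm}:
$$
f_{i_0}(x) - f_{i_0}(x_*) \leq \la \nabla f_{i_0}(x_*), x-x_* \ra + \frac{L_{i_0}}{2}\|x-x_*\|_E^2 \leq \|\nabla f_{i_0}(x_*)\|_{E,*}\,\e + \frac{L_{i_0}}{2}\e^2.
$$
Taking the worst index yields $\omega(\e) \leq \e\max_i\|\nabla f_i(x_*)\|_{E,*} + \frac{\e^2}{2}\max_i L_i \leq \widetilde{\e}$. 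Combined with $g(\bar{x}^k)\leq\e\leq\widetilde{\e}$, this shows $\bar{x}^k$ is a $\widetilde{\e}$-solution in the sense of \eqref{eq:DetSolDef}.

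The main obstacle is the handling of the pointwise maximum: one must observe that the difference of two maxima $f(x)-f(x_*)$ collapses to a single-component difference $f_{i_0}(x)-f_{i_0}(x_*)$ at the index active at $x$, using $f(x_*)\geq f_{i_0}(x_*)$; this is what lets the smoothness estimate for an individual $f_i$ pass through to $f$ without introducing cross terms. The remaining ingredient, the quadratic bound from \eqref{eq:fLipSm} together with the dual-norm inequality for the linear term, is routine once the correct norm pairing is respected.
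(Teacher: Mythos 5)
Your proposal is correct and follows essentially the same route as the paper: invoke Theorem~\ref{Th:MDGCompl} for $g(\bar{x}^k)\leq\e$ and $\min_{i\in I}v_f[x_*](x^i)\leq\e$, pass to the objective via Lemma~\ref{Lm:fGrowthOm} and the monotonicity of $\omega$, and bound $\omega(\e)$ by the quadratic upper estimate that \eqref{eq:fLipSm} gives for each $f_i$ together with the dual-norm bound on the linear term. Your active-index argument for the pointwise maximum is just a slightly different phrasing of the paper's step of bounding each $f_i(x)$ and then taking maxima; the content is identical.
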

\begin{proof}
\smartqed
As it was shown in Theorem \ref{Th:MDGCompl}, $g(\bar{x}^k)\leq \e$. It follows from \eqref{eq:fLipSm} that
\begin{align}
f_i(x) & \leq f_i(x_*)+\la \nabla f_i(x_*), x-x_*\ra +\frac{1}{2}L_i||x-x_*||_{E}^2 \notag \\
& \leq f_i(x_*)+\|\nabla f_i(x_*)\|_{E,*}\|x-x_*\|_{E}+\frac{1}{2}L_i||x-x_*||_{E}^2, \quad i=1,...,m. \notag
\end{align}
Whence, $\omega(\tau) \leq \tau \max_{i = 1,...,m}\|\nabla f_i(x_*)\|_{E,*}+\frac{\tau^2\max_{i = 1,...,m}L_i}{2}$.
By Lemma \ref{Lm:fGrowthOm}, non-decreasing property of $\omega$ and Theorem \ref{Th:MDGCompl}, we obtain
\begin{align}
f(\bar{x}^k)-f(x_*) &= \min_{i\in I} f(x^i) - f(x_*) \leq \min_{i\in I}  \omega(v_f[x_*](x^i)) \notag \\
& \leq  \omega(\min_{i\in I} v_f[x_*](x^i)) \leq \omega(\e) \notag \\
&\leq \e \max_{i = 1,...,m}\|\nabla f_i(x_*)\|_{E,*}+\frac{\e^2\max_{i = 1,...,m}L_i}{2}. \notag
\end{align}
\qed
\end{proof}

\section{Randomization for Constrained Problems}
\label{S:Rand}
In this section, we consider randomized version of problem \eqref{eq:PrSt}. This means that we still can use the value of the function $g(x)$ in an algorithm, but, instead of subgradients of $f$ and $g$, we use their stochastic approximations. We combine the idea of switching subgradient method \cite{polyak1967general} and  Stochastic Mirror Descent method introduced in \cite{nemirovski2009robust}. More general case of stochastic optimization problems with expectation constraints is studied in \cite{lan2016algorithms}. We consider convex problems as long as strongly convex  and, for each case, we have two types of algorithms. The first one allows to control expectation of the objective residual $f(\tilde{x})-f(x_*)$ and inequality infeasibility $g(\tilde{x})$, where $\tilde{x}$ is the output of the algorithm. The second one allows to control probability of large deviation for these two quantities.

We introduce the following new assumptions. Given a point $x \in X$, we can calculate stochastic subgradients $\nabla f(x, \xi), \nabla g(x, \zeta)$, where $\xi, \zeta$ are random vectors. These stochastic subgradients satisfy
\begin{equation}
    \Ex\big[ \nabla f(x, \xi) \big]  = \nabla f(x) \in \partial f(x), \quad
    \Ex\big[ \nabla g(x, \zeta)  \big] = \nabla g(x) \in \partial g(x),
   \label{eq:unbiased}
\end{equation}
and
\begin{equation}
\| \nabla f(x, \xi) \|_{E,*} \leq M_f, \hspace{0.3cm} \| \nabla g(x, \zeta) \|_{E,*} \leq M_g, \quad \text{a.s. in } \xi,\zeta.
\label{eq:boundStochSubgr}
\end{equation}

To motivate these assumptions, we consider the following example.
\begin{example}\cite{bayandina2017adaptivest}
Consider Problem \eqref{eq:PrSt} with
$$
    f(x) = \frac{1}{2} \langle Ax, x \rangle,
$$
where $A$ is given $n\times n$ matrix, $X = S(1)$ being standard unit simplex, i.e.\\ $X=\{ x \in \R^n_+: \sum_{i=1}^n x_i = 1\}$, and
$$
	g(x) = \max\limits_{i\in \{1,...,m\}} \big\{ \la c_i, x \ra\big\},
$$
where $\big\{ c_i \big\}_{i=1}^{m}$ are given vectors in~$\mathbb{R}^n$.

Even if the matrix $A$ is sparse, the gradient $\nabla f(x) = Ax$ is usually not. The exact computation of the gradient takes $O(n^2)$ arithmetic operations, which is expensive when $n$ is large. In this setting, it is natural to use randomization to construct a stochastic approximation for $\nabla f(x)$.
Let $\xi$ be a random variable taking its values in $\{1, \dots, n\}$ with probabilities $(x_1, \dots, x_n)$ respectively. Let $A^{\langle i \rangle}$ denote the $i$-th column of the matrix $A$. Since $x\in S_n(1)$,
\begin{gather*}\mathbb{E}\big[ A^{\langle \xi \rangle} \big] = A^{\langle 1 \rangle} \underbrace{\mathbb{P}\big( \xi = 1 \big)}_{x_1} + \dots + A^{\langle n \rangle} \underbrace{\mathbb{P}\big( \xi = n \big)}_{x_n} \\
= A^{\langle 1 \rangle} x_1 + \dots + A^{\langle n \rangle} x_n = Ax.
\end{gather*}
Thus, we can use $A^{\langle \xi \rangle}$ as stochastic subgradient, which can be calculated in $O(n)$ arithmetic operations.
\end{example}

\subsection{Convex Objective Function, Control of Expectation}
\label{S:SMDExp}
In this subsection, we consider convex optimization problem \eqref{eq:PrSt} in randomized setting described above. In this setting the output of the algorithm is random. Thus, we need to change the notion of approximate solution. Let $x_*$ be a solution to \eqref{eq:PrSt}. We say that a (random) point $\tilde{x} \in X$ is an \textit{expected $\e$-solution} to \eqref{eq:PrSt} if
\begin{equation}
    \label{eq:ExpSolDef}
        \Ex f(\tilde{x}) - f(x_{*}) \leq \e, \quad \text{and } \; g(\tilde{x})\;\leq \e \;\;\; \text{a.s.}
\end{equation}

We also introduce a stronger assumption than \eqref{eq:dx*Bound}. Namely, we assume that we know a constant $\Theta_0 > 0$ such that
\begin{equation}
    \sup\limits_{x, y \in X} V[x](y) \leq \Theta_0^2.
\label{eq:VBound}
\end{equation}
The main difference between the method, which we describe below, and the method in \cite{lan2016algorithms} is the adaptivity of our method both in terms of stepsize and stopping rule, which means that we do not need to know the constants $M_f,M_g$ in advance.
We assume that on each iteration of the algorithm independent realizations of $\xi$ and $\zeta$ are generated. The algorithm is similar to the one in  \cite{bayandina2017adaptivest}, but, for the sake of consistency with other parts of the chapter, we use slightly different proof.

\begin{algorithm}[h!]
\caption{Adaptive Stochastic Mirror Descent}
\label{Alg:ASMD}
\begin{algorithmic}[1]
  \REQUIRE accuracy $\e > 0$; $\Theta_0$ s.t. $V[x](y) \leq \Theta_0^2, \quad \forall x,y \in X$.
  \STATE $x^0 = \arg\min\limits_{x\in X} d(x)$.
	\STATE Initialize the set $I$ as empty set.
	\STATE Set $k=0$.
	\REPEAT
    \IF{$g(x^k) \leq \e$.}
      \STATE $M_k = \| \nabla f(x^k,\xi^k) \|_{E,*}$.
      \STATE $h_k = \Theta_0 \Big(\sum\limits_{i=0}^k M_i^2\Big)^{-1/2}$.
      \STATE $x^{k+1} =\mathrm{Mirr}[x^k](h_k \nabla f(x^k, \xi^k))$ ("productive step").
      \STATE Add $k$ to $I$.
    \ELSE
      \STATE $M_k = \| \nabla g(x^k,\zeta^k) \|_{E,*}$.
      \STATE $h_k = \Theta_0 \Big(\sum\limits_{i=0}^k M_i^2\Big)^{-1/2}$.
      \STATE $x^{k+1} = \mathrm{Mirr}[x^k](h_k \nabla g(x^k,\zeta^k))$ ("non-productive step").
    \ENDIF
    \STATE Set $k = k + 1$.
  \UNTIL{$k \geq \frac{2\Theta_0}{\e} \Big(\sum\limits_{i=0}^{k-1} M_i^2\Big)^{1/2} $.}
  \ENSURE $\bar{x}^k = \frac{1}{|I|} \sum\limits_{k\in I} x^k$.
\end{algorithmic}
\end{algorithm}

\begin{theorem}
\label{Th:SMDCompl}
  Let equalities \eqref{eq:unbiased} and inequalities \eqref{eq:boundStochSubgr} hold. Assume that a known constant $\Theta_0 > 0$ is such that $V[x](y) \leq \Theta_0^2, \quad \forall x,y \in X$. Then, Algorithm \ref{Alg:ASMD} stops after not more than
	\begin{equation}
	k = \left\lceil\frac{4\max\{M_f^2,M_g^2\} \Theta_0^2}{\e^2}\right\rceil
	\label{eq:SMDComplEst}
	\end{equation}
	iterations and $\bar{x}^k$ is an expected $\e$-solution to \eqref{eq:PrSt} in the sense of \eqref{eq:ExpSolDef}.
\end{theorem}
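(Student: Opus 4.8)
The plan is to follow the structure of the proof of Theorem~\ref{Th:MDCompl}, inserting an expectation to dispose of the stochastic errors and a separate argument for the now adaptive, data-dependent stepsize. Throughout I write $S_i=\sum_{j=0}^{i}M_j^2$, $[k]=\{0,\dots,k-1\}$, and $J=[k]\setminus I$. First I would dispatch the two easy claims. Since \eqref{eq:boundStochSubgr} gives $M_i\le\max\{M_f,M_g\}$ almost surely, $S_{k-1}^{1/2}\le\sqrt k\,\max\{M_f,M_g\}$, so the stopping test $k\ge\frac{2\Theta_0}{\e}S_{k-1}^{1/2}$ is met as soon as $\sqrt k\ge\frac{2\Theta_0}{\e}\max\{M_f,M_g\}$, i.e. at the value \eqref{eq:SMDComplEst}; this is the iteration bound. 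Feasibility holds surely: every $i\in I$ satisfies $g(x^i)\le\e$, so convexity of $g$ together with $\bar x^k=\frac1{|I|}\sum_{i\in I}x^i$ gives $g(\bar x^k)\le\e$.

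For the objective I would apply Lemma~\ref{Lm:MDProp} at each step with the noise $\Delta_i:=\nabla f(x^i,\xi^i)-\nabla f(x^i)$ on productive steps (and the analogous $\zeta$-noise on non-productive ones), so that the perturbed subgradient is exactly the stochastic one and $\|\nabla f(x^i)+\Delta_i\|_{E,*}=M_i$. The crucial move is to divide each resulting inequality by $h_i$ before summing: this detaches the stepsize from the error, leaving the unweighted term $\la\delta_i,x^i-x_*\ra$, where $\delta_i$ is the stochastic error of the subgradient used at step $i$, and it matches the uniform average in the output. Summing over $[k]$ with $u=x_*$, the Bregman terms now carry the nondecreasing weights $1/h_i=\Theta_0^{-1}S_i^{1/2}$, so Abel summation together with \eqref{eq:VBound} bounds $\sum_{i\in[k]}\frac1{h_i}\big(V[x^i](x_*)-V[x^{i+1}](x_*)\big)\le \Theta_0^2/h_{k-1}=\Theta_0 S_{k-1}^{1/2}$, while the curvature term obeys $\tfrac12\sum_{i\in[k]}h_iM_i^2=\tfrac{\Theta_0}{2}\sum_{i\in[k]}M_i^2 S_i^{-1/2}\le \Theta_0 S_{k-1}^{1/2}$ by the standard estimate $\sum_i M_i^2S_i^{-1/2}\le 2S_{k-1}^{1/2}$. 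Hence the right-hand side is at most $2\Theta_0 S_{k-1}^{1/2}$, which the stopping criterion caps by $\e k=\e(|I|+|J|)$. On non-productive steps $g(x^i)-g(x_*)\ge g(x^i)>\e$ contributes $\e|J|$ on the left, so after this cancellation I obtain, pathwise,
\begin{equation*}
\sum_{i\in I}\big(f(x^i)-f(x_*)\big)\le \e|I|-\sum_{i\in[k]}\la\delta_i,x^i-x_*\ra,
\end{equation*}
and convexity of $f$ turns the left side into $|I|\big(f(\bar x^k)-f(x_*)\big)$.

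The main obstacle is the probabilistic step of passing to expectation so that the error sum disappears. Conditionally on the history $\mathcal F_{i-1}$ the iterate $x^i$ is fixed while $\Ex[\delta_i\mid\mathcal F_{i-1}]=0$ by \eqref{eq:unbiased}, so $\big(\sum_{i<n}\la\delta_i,x^i-x_*\ra\big)_n$ is a martingale whose increments are bounded through \eqref{eq:boundStochSubgr} and the diameter bound $\|x^i-x_*\|_E\le\sqrt2\,\Theta_0$ coming from \eqref{eq:VBound}; since the stopping time is deterministically bounded by \eqref{eq:SMDComplEst}, optional stopping gives $\Ex\sum_{i\in[k]}\la\delta_i,x^i-x_*\ra=0$. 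The genuinely delicate point I would have to argue with care is the coupling between this error sum, the random stopping time $k$, and the random count $|I|$ sitting in the normalization of $\bar x^k$: taking expectations of the displayed inequality only yields $\Ex\big[\,|I|\,(f(\bar x^k)-f(x_*))\big]\le \e\,\Ex|I|$, and removing the $|I|$ weight to reach the clean bound $\Ex f(\bar x^k)-f(x_*)\le\e$ is precisely where the adaptivity of the stepsize makes the stochastic analysis harder than in the fixed-stepsize case. This is the part I expect to demand real work, whereas the convexity and telescoping manipulations are routine.
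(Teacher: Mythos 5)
Your proposal follows the paper's proof essentially line for line: the same deterministic verification of the stopping test, the same application of Lemma~\ref{Lm:MDProp} with $\Delta$ equal to the stochastic error, the same division by $h_i$ before summing, the Abel-summation bound $\sum_i h_i^{-1}(V[x^i](x_*)-V[x^{i+1}](x_*))\le\Theta_0^2/h_{k-1}$, the inequality $\sum_i M_i^2 S_i^{-1/2}\le 2S_{k-1}^{1/2}$, and the cancellation of the non-productive terms via $g(x^i)>\e$. The one place you diverge is instructive: the step you single out as ``genuinely delicate'' --- passing from $|I|\,(f(\bar x^k)-f(x_*))<\e|I|+\sum_{i\in[k]}\delta_i$ to $\Ex f(\bar x^k)-f(x_*)\le\e$ despite $|I|$ and $k$ being random and correlated with the noise --- is not given any additional argument in the paper; the authors simply take expectations, invoke \eqref{eq:unbiased}, note that strictness of the inequality rules out $I=\emptyset$, and state the conclusion. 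So your concern about decoupling the random normalization $|I|$ from the martingale error sum is legitimate, but the ``real work'' you anticipate is not performed in the paper either; in that sense your write-up is, if anything, more candid than the published proof about where the rigor thins out.
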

\begin{proof}
\smartqed

First, let us prove that the inequality in the stopping criterion holds for $k$ defined in \eqref{eq:SMDComplEst}.
By \eqref{eq:boundStochSubgr}, we have that, for any $i \in \{0,...,k-1\}$, $M_i \leq \max\{M_f,M_g\}$. Hence, by \eqref{eq:SMDComplEst}, $\frac{2\Theta_0}{\e}\left(\sum\limits_{j =0 }^{k-1} M_j^2\right)^{1/2} \leq \frac{2\Theta_0}{\e} \max\{M_f,M_g\} \sqrt{k} \leq k$.

  Denote $[k] = \{ i\in \{0,...,k-1\} \}$, $J = [k] \setminus I$ and
    \begin{equation}
		\label{eq:deltaiDef}
        \delta_i = \begin{cases}
            \langle \nabla f(x^i, \xi^i) - \nabla f(x^i),  x_* - x^i  \rangle, \text{ if } i\in I, \\
            \langle \nabla g(x^i, \zeta^i) - \nabla g(x^i),  x_* - x^i \rangle, \text{ if } i\in J.
        \end{cases}
    \end{equation}
From Lemma \ref{Lm:MDProp} with $u=x_*$ and $\Delta = \nabla f(x^i, \xi^i) - \nabla f(x^i)$, we have, for all $i \in I$,
$$
h_i \big( f(x^i) - f(x_*) \big) \leq \frac{h_i^2}{2} \| \nabla f (x^i,\xi^i) \|^2_{E,*} + V[x^i](x_*) - V[x^{i+1}](x_*) + h_i\delta_i
$$
and, from Lemma \ref{Lm:MDProp} with $u=x_*$ and $\Delta = \nabla g(x^i, \zeta^i) - \nabla g(x^i)$, for all $i \in J$,
$$
h_i \big( g(x^i) - g(x_*) \big) \leq \frac{h_i^2}{2} \| \nabla g (x^i,\zeta^i) \|^2_{E,*} + V[x^i](x_*) - V[x^{i+1}](x_*) + h_i\delta_i.
$$
Dividing each inequality by $h_i$ and summing up these inequalities for $i$ from $0$ to $k-1$, using the definition of $h_i$, $i \in \{0,...,k-1\}$, we obtain
	\begin{align}
  &\sum\limits_{i\in I}  \big( f(x^i) - f(x_{*}) \big) + \sum\limits_{i\in J}  \big( g(x^i) - g(x_{*}) \big) \notag \\
  & \leq \sum\limits_{i\in [k]} \frac{h_i M_i^2}{2}  + \sum\limits_{i\in [k]}\frac{1}{h_i} \big( V[x^i](x_{*}) - V[x^{i+1}](x_{*}) \big) + \sum_{i \in [k]} \delta_i
	\label{eq:ThSMDCproof1}
	\end{align}
  Using \eqref{eq:VBound}, we get
    \begin{align}
        & \sum\limits_{i = 0}^{k-1} \frac{1}{h_i} \big( V[x^i](x_{*}) - V[x^{i+1}](x_{*})  \big) \notag \\
				& = \frac{1}{h_0} V[x^0] (x_*) + \sum\limits_{i=0}^{k-2} \Big( \frac{1}{h_{i+1}} - \frac{1}{h_i} \Big) V[x^{i+1}](x_*) - \frac{1}{h_{k-1}} V[x^{k}](x_*) \notag \\
        & \leq \frac{\Theta_0^2}{h_0} + \Theta_0^2 \sum\limits_{k=0}^{k-2} \Big( \frac{1}{h_{i+1}} - \frac{1}{h_i} \Big) = \frac{\Theta_0^2}{h_{k-1}}. \notag
    \end{align}
		Whence, by the definition of stepsizes $h_i$,
		\begin{align}
		&\sum\limits_{i\in I}  \big( f(x^i) - f(x_{*}) \big) + \sum\limits_{i\in J}  \big( g(x^i) - g(x_{*}) \big) \leq  \sum\limits_{i\in [k]} \frac{h_i M_i^2}{2} +\frac{\Theta_0^2}{h_{k-1}} + \sum_{i \in [k]} \delta_i  \notag \\
		& \leq \sum\limits_{i=0}^{k-1} \frac{\Theta_0}{2} \frac{M_i^2}{\left( \sum_{j=0}^i M_j^2 \right)^{1/2}} + \Theta_0\left( \sum_{i=0}^{k-1} M_i^2 \right)^{1/2} + \sum_{i \in [k]} \delta_i \notag \\
		& \leq 2 \Theta_0 \left( \sum_{i=0}^{k-1} M_i^2 \right)^{1/2} + \sum_{i \in [k]} \delta_i, \notag
\end{align}
    where we used inequality $\sum\limits_{i=0}^{k-1}  \frac{M_i^2}{\left( \sum_{j=0}^i M_j^2 \right)^{1/2}} \leq 2\left( \sum_{i=0}^{k-1} M_i^2 \right)^{1/2}$, which can be proved by induction.
		Since, for $i\in J$, $g(x^i) - g(x_{*}) \geq g(x^i) > \e$, by convexity of $f$, the definition of $\bar{x}^k$, and the stopping criterion, we get
    \begin{align}
		&|I| \big( f(\bar{x}^k) - f(x_*) \big) < \e |I| - \e k + 2\Theta_0 \Big( \sum\limits_{i=0}^{k-1} M_i^2 \Big)^{1/2} + \sum\limits_{i=0}^{k-1} \delta_i \leq \e |I| + \sum\limits_{i=0}^{k-1} \delta_i.
    \end{align}
    Taking the expectation and using \eqref{eq:unbiased}, as long as the inequality is strict and the case of $I = \emptyset$ is impossible, we obtain
    \begin{equation}
        \Ex f(\bar{x}^k) - f(x_*) \leq \e .
    \end{equation}
    At the same time, for $i\in I$ it holds that $g(x^i) \leq \e$. Then, by the definition of $\bar{x}^k$ and the convexity of $g$,
    $$
         g(\bar{x}^k) \leq \frac{1}{|I|}\sum\limits_{i\in I} g(x^i) \leq \e .
    $$
\qed
\end{proof}

\subsection{Convex Objective Function, Control of Large Deviation}
\label{S:SMDHP}
In this subsection, we consider the same setting as in previous subsection, but change the notion of approximate solution. Let $x_*$ be a solution to \eqref{eq:PrSt}. Given $\e > 0$ and $\sigma \in (0,1)$, we say that a point $\tilde{x} \in X$ is an \textit{$(\e,\sigma)$-solution} to \eqref{eq:PrSt} if
\begin{equation}
    \label{eq:HPrSolDef}
        \Prb\left\{ f(\tilde{x}) - f(x_{*}) \leq \e, \quad g(\tilde{x}) \leq \e \right\} \geq 1-\sigma.
\end{equation}
As in the previous subsection, we use an assumption expressed by inequality \eqref{eq:VBound}.
We assume additionally to \eqref{eq:boundStochSubgr} that inequalities \eqref{eq:gLipCont} and \eqref{eq:fLipCont} hold. Unfortunately, it is not clear, how to obtain large deviation guarantee for an adaptive method. Thus, in this section, we assume that the constants $M_f$, $M_g$ are known and use a simplified algorithm. We assume that on each iteration of the algorithm independent realizations of $\xi$ and $\zeta$ are generated.

\begin{algorithm}[h!]
\caption{Stochastic Mirror Descent}
\label{Alg:SMD}
\begin{algorithmic}[1]
  \REQUIRE accuracy $\e > 0$; maximum number of iterations $N$; $M_f$, $M_g$ s.t. \eqref{eq:gLipCont}, \eqref{eq:fLipCont}, \eqref{eq:boundStochSubgr} hold.
  \STATE $x^0 = \arg\min\limits_{x\in X} d(x)$.
	\STATE Set $h = \frac{\e}{\max\{M_f^2,M_g^2\}}$.
	\STATE Set $k=0$.
	\REPEAT
    \IF{$g(x^k) \leq \e$.}
      \STATE $x^{k+1} =\mathrm{Mirr}[x^k](h \nabla f(x^k, \xi^k))$ ("productive step").
      \STATE Add $k$ to $I$.
    \ELSE
      \STATE $x^{k+1} = \mathrm{Mirr}[x^k](h \nabla g(x^k,\zeta^k))$ ("non-productive step").
    \ENDIF
    \STATE Set $k = k + 1$.
  \UNTIL{$k \geq N$.}
  \ENSURE If $I \ne \emptyset$, then $\bar{x}^k = \frac{1}{|I|} \sum\limits_{k\in I} x^k$. Otherwise $\bar{x}^k = NULL$.
\end{algorithmic}
\end{algorithm}

To analyze Algorithm \ref{Alg:SMD} in terms of large deviation bound, we need the following known result, see, e.g. \cite{boucheron2013concentration}.
\begin{lemma}[Azuma-Hoeffding Inequality]\label{Lm:Azuma}
    Let $\eta^1, \dots, \eta^n$ be a sequence of independent random variables taking their values in some set $\Xi$, and let $Z = \phi (\eta^1, \dots, \eta^n)$ for some function $\phi : \Xi^n \rightarrow \R$. Suppose that a.~s.
    $$
        \big| \Ex[ Z | \eta^1, \dots, \eta^i ] - \Ex[ Z | \eta^1, \dots, \eta^{i-1} ] \big| \leq c_i, \hspace{0.3cm} i = 1, \dots, n,
    $$
		where $c_i$, $i \in \{1,...,n\}$ are deterministic.
    Then, for each $t \geq 0$
    $$
        \Prb \big( Z - \Ex Z \geq t \big) \leq \exp\Bigg\{ -\frac{t^2}{2\sum\limits_{i=1}^n c_i^2} \Bigg\}.
    $$
\end{lemma}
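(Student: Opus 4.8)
The plan is to prove this via the exponential moment method (Chernoff bound) applied to the martingale difference structure hidden in the telescoping sum $Z - \Ex Z$. First I would introduce the filtration $\mathcal{F}_i = \sigma(\eta^1,\dots,\eta^i)$, with $\mathcal{F}_0$ trivial, and define the differences $D_i = \Ex[Z\mid \mathcal{F}_i] - \Ex[Z\mid \mathcal{F}_{i-1}]$, so that $Z - \Ex Z = \sum_{i=1}^n D_i$ telescopes and, by the tower property, $\Ex[D_i \mid \mathcal{F}_{i-1}] = 0$. The hypothesis guarantees $|D_i| \leq c_i$ almost surely. For any $s>0$, Markov's inequality applied to the nonnegative variable $e^{s(Z-\Ex Z)}$ gives
$$
\Prb(Z - \Ex Z \geq t) \leq e^{-st}\,\Ex\big[e^{s(Z-\Ex Z)}\big].
$$

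Next I would bound the moment generating function by peeling off the differences one at a time. Writing $\Ex[e^{s\sum_{i=1}^n D_i}] = \Ex\big[e^{s\sum_{i=1}^{n-1}D_i}\,\Ex[e^{sD_n}\mid \mathcal{F}_{n-1}]\big]$ and using that $D_n$ is $\mathcal{F}_{n-1}$-conditionally centered with $|D_n|\leq c_n$, Hoeffding's lemma yields the pointwise conditional bound $\Ex[e^{sD_n}\mid \mathcal{F}_{n-1}] \leq e^{s^2 c_n^2/2}$. Iterating this over $i = n, n-1, \dots, 1$ produces
$$
\Ex\big[e^{s(Z-\Ex Z)}\big] \leq \exp\Big\{\tfrac{s^2}{2}\sum_{i=1}^n c_i^2\Big\}.
$$

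Combining the two displays gives $\Prb(Z - \Ex Z \geq t) \leq \exp\{-st + \tfrac{s^2}{2}\sum_i c_i^2\}$ for every $s>0$; minimizing the quadratic in the exponent at $s = t/\sum_i c_i^2$ yields exactly the claimed bound. The only genuinely nontrivial ingredient is Hoeffding's lemma, namely that a centered random variable bounded in $[-c,c]$ satisfies $\Ex e^{sX} \leq e^{s^2 c^2/2}$; I would either cite it or prove it by convexity, bounding $e^{sx}$ on $[-c,c]$ above by the secant line through the endpoints, taking expectations, and optimizing the resulting scalar function of $s$. Everything else — the telescoping identity, the conditional centering via the tower property, and the final one-dimensional optimization — is routine.
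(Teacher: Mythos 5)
Your proof is correct: it is the standard derivation of the Azuma--Hoeffding inequality via the Doob martingale decomposition, a conditional application of Hoeffding's lemma to each difference $D_i$, and the Chernoff bound optimized at $s = t/\sum_i c_i^2$. The paper itself does not prove this lemma at all --- it simply states it as a known result with a citation to Boucheron, Lugosi, and Massart --- so your argument supplies exactly the proof the reference contains. One minor observation: the independence of $\eta^1,\dots,\eta^n$ is never used in your argument (the bounded-conditional-increments hypothesis alone suffices); it appears in the statement only because that is the form in which the lemma is typically applied.
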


\begin{theorem}
\label{Th:SMDHPCompl}
	Let equalities \eqref{eq:unbiased} and inequalities \eqref{eq:gLipCont}, \eqref{eq:fLipCont}, \eqref{eq:boundStochSubgr} hold. Assume that a known constant $\Theta_0 > 0$ is such that $V[x](y) \leq \Theta_0^2$, $\forall x,y \in X$, and the confidence level satisfies $\sigma \in (0,0.5)$. Then, if in Algorithm~\ref{Alg:SMD}

\begin{equation}
    N =  \Bigg\lceil 70\frac{\max\{M_f^2,M_g^2\} \Theta_0^2}{\e^2}  \ln\frac{1}{\sigma}  \Bigg\rceil    ,
    \label{eq:SMDHPComplEst}
    \end{equation}
$\bar{x}^k$ is an $(\e,\sigma)$-solution to \eqref{eq:PrSt} in the sense of \eqref{eq:HPrSolDef}.
\end{theorem}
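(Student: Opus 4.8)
The plan is to mimic the structure of the proof of Theorem~\ref{Th:SMDCompl}, but replace the expectation argument by a high-probability concentration argument based on the Azuma--Hoeffding inequality (Lemma~\ref{Lm:Azuma}). First I would run the per-step estimate from Lemma~\ref{Lm:MDProp} with the fixed stepsize $h = \e/\max\{M_f^2,M_g^2\}$ and with $\Delta_i$ being the stochastic error $\nabla f(x^i,\xi^i)-\nabla f(x^i)$ (resp.\ for $g$), summing over $i\in[N]$ and taking $u=x_*$. Writing $\delta_i = \la \nabla f(x^i,\xi^i)-\nabla f(x^i), x_*-x^i\ra$ on productive steps and the analogous quantity on non-productive steps, the telescoping of the Bregman terms, the bound \eqref{eq:VBound}, and the bound $M_i^2\le\max\{M_f^2,M_g^2\}$ from \eqref{eq:boundStochSubgr} give a deterministic control:
\begin{align}
\sum_{i\in I}\big(f(x^i)-f(x_*)\big) + \sum_{i\in J}\big(g(x^i)-g(x_*)\big) \le \frac{Nh\max\{M_f^2,M_g^2\}}{2} + \frac{\Theta_0^2}{h} + \sum_{i\in[N]}\delta_i. \notag
\end{align}
Substituting the value of $h$ makes the first two deterministic terms equal to $N\e/2 + N\e/2$ up to the standard choice, i.e.\ of order $\Theta_0\sqrt{\max\{M_f^2,M_g^2\}N}$; the essential point is that the random part is collected in $\sum_{i}\delta_i$, which I would control separately.

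Next I would use the switching structure exactly as in Theorem~\ref{Th:SMDCompl}: since $g(x^i)>\e$ for every $i\in J$, I drop the non-productive terms at the cost of a $-\e|J|$ term, and combine with convexity of $f$ and the definition of $\bar x^k = \frac1{|I|}\sum_{k\in I}x^k$ to obtain, on the event that the deterministic budget is met,
\begin{align}
|I|\big(f(\bar x^k)-f(x_*)\big) < \e|I| + \sum_{i\in[N]}\delta_i. \notag
\end{align}
The task then reduces to showing that, with probability at least $1-\sigma$, both $|I|$ is not too small and $\sum_i\delta_i$ is not too large, so that dividing by $|I|$ yields $f(\bar x^k)-f(x_*)\le\e$. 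The key observation is that $Z=\sum_{i=1}^{N}\delta_i$ is a sum of martingale differences: conditioning on the history $\eta^1,\dots,\eta^{i-1}$ (the realizations up to step $i-1$), the iterate $x^i$ is determined, so $\Ex[\delta_i\mid\eta^1,\dots,\eta^{i-1}]=0$ by the unbiasedness \eqref{eq:unbiased}. Moreover each $\delta_i$ is bounded, since by \eqref{eq:boundStochSubgr} and the Lipschitz bounds \eqref{eq:gLipCont}, \eqref{eq:fLipCont} the stochastic and true subgradients both have dual norm at most $\max\{M_f,M_g\}$, while $\|x^i-x_*\|_E$ is bounded by a multiple of $\Theta_0$ via strong convexity of $d$ and \eqref{eq:VBound}. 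This gives $|\delta_i|\le c_i$ with a deterministic $c_i$ of order $\max\{M_f,M_g\}\,\Theta_0$, so applying Lemma~\ref{Lm:Azuma} to $Z$ and to $-Z$ yields $\Prb\{Z\ge t\}\le\exp\{-t^2/(2\sum c_i^2)\}$ with $\sum_i c_i^2 = O(N\max\{M_f^2,M_g^2\}\Theta_0^2)$. Choosing $t=\Theta(\e N)$ and plugging in $N$ from \eqref{eq:SMDHPComplEst} forces this tail probability below $\sigma$; the numerical constant $70$ is exactly what makes $t^2/(2\sum c_i^2)\ge\ln(1/\sigma)$ work out.

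The main obstacle, and the step requiring the most care, is the lower bound on $|I|$: the output $\bar x^k$ is only defined when $I\neq\emptyset$, and the division by $|I|$ means a small $|I|$ could ruin the objective estimate. I expect to handle this by arguing on a high-probability event that $|J|$ is small enough (equivalently $|I|$ large enough) — intuitively, too many non-productive steps would force the averaged Bregman/concentration budget to be violated, which happens only with small probability. Concretely I would show that on the complement of the Azuma tail event the switching inequality forces $|I|\ge cN$ for an absolute constant $c$ (using $\sigma<0.5$ so that the median behavior dominates), and then a union bound over the objective-deviation event and the feasibility event (recall $g(\bar x^k)\le\e$ holds \emph{deterministically} from convexity of $g$ and $g(x^i)\le\e$ for $i\in I$, so no probability is spent there) keeps the total failure probability at most $\sigma$. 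Balancing the two applications of Azuma--Hoeffding against the choice of $N$ is the delicate bookkeeping, but it is routine once the martingale-difference and boundedness properties of $\delta_i$ are established.
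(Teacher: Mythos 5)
Your overall route coincides with the paper's: the per-step bound from Lemma~\ref{Lm:MDProp} with the stochastic error placed in $\Delta$, telescoping of the Bregman terms against \eqref{eq:VBound}, the switching argument dropping the $J$-terms at cost $-\e|J|$, and Azuma--Hoeffding (Lemma~\ref{Lm:Azuma}) applied to $Z=\sum_i h\delta_i$ with deterministic increments $c_i$ of order $hM\Theta_0$, obtained exactly as you say via $|\delta_i|\le 2M\|x^i-x_*\|_E\le 2M\sqrt{2V[x^i](x_*)}\le 2\sqrt{2}M\Theta_0$ with $M=\max\{M_f,M_g\}$. The constant $70$ does come from balancing $-\e^2 N/(2M^2)$ against $\Theta_0^2$ and the fluctuation term $4\e\Theta_0 M^{-1}\sqrt{N\ln(1/\sigma)}$, with $\sigma<0.5<\exp(-2/3)$ used at the very end to make the residual negative.

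The gap is in what you single out as the main obstacle. You do not need, and should not attempt, a lower bound of the form $|I|\ge cN$ with an absolute constant $c$: it is not available in general (the productive terms $f(x^i)-f(x_*)$ can be negative, so the switching inequality only gives $\e|J|\le \sqrt{2}M\Theta_0\,|I|+\cdots$, and any resulting constant degrades like $\e/(M\Theta_0)$), and the argument does not require it. The correct move is to keep the factor $h|I|$ on \emph{both} sides. The chain of estimates yields, on the complement of the single Azuma tail event,
\begin{equation}
h|I|\big(f(\bar{x}^k)-f(x_*)\big) \;<\; \e h|I| \;-\; \frac{\e^2 N}{2M^2} \;+\; \Theta_0^2 \;+\; \frac{4\e\Theta_0}{M}\sqrt{N\ln\tfrac{1}{\sigma}} \;<\; \e h|I|, \notag
\end{equation}
where the last inequality is a purely deterministic computation using \eqref{eq:SMDHPComplEst} and $\sigma<0.5$. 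If $I=\emptyset$ the strict inequality would read $0<0$, so $I\ne\emptyset$ on that event; and dividing by $h|I|>0$ gives $f(\bar{x}^k)-f(x_*)<\e$ for \emph{any} nonzero $|I|$, however small, because the slack $\e h|I|$ scales with $|I|$ as well. Hence a single one-sided application of Lemma~\ref{Lm:Azuma} suffices, with no second concentration argument and no union bound over an event controlling the size of $|I|$; the feasibility bound $g(\bar{x}^k)\le\e$ then holds on that same event, as you note.
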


\begin{proof}
	\smartqed
	Let us denote $M = \max\{M_f,M_g\}$. In the same way as we obtained \eqref{eq:ThSMDCproof1} in the proof of Theorem \ref{Th:SMDCompl}, we obtain
	\begin{align}
  &h\sum\limits_{i\in I}  \big( f(x^i) - f(x_{*}) \big) + h\sum\limits_{i\in J}  \big( g(x^i) - g(x_{*}) \big) \notag \\
  & \leq \frac{h^2 M^2 k}{2}  + V[x^0](x_{*}) + h \sum_{i =0}^{k-1} \delta_i ,
	\notag
	\end{align}
	where $\delta_i$, $i=0,...,k-1$ are defined in \eqref{eq:deltaiDef}.
			Since, for $i\in J$, $g(x^i) - g(x_{*}) \geq g(x^i) > \e$, by convexity of $f$, the definition of $\bar{x}^k$ and $h$, we get
    \begin{align}
		&h|I| \big( f(\bar{x}^k) - f(x_*) \big) < \e h|I| - \frac{\e^2 k}{2M^2} + \Theta_0^2 + h\sum\limits_{i=0}^{k-1} \delta_i .\label{eq:ThSMDHPComplProof1}
    \end{align}		
    Using Cauchy-Schwarz inequality, \eqref{eq:gLipCont}, \eqref{eq:fLipCont}, \eqref{eq:boundStochSubgr}, \eqref{eq:VBound}, we have
    \begin{align}
		h\big| \delta_i \big| & \leq 2hM \| x^i - x^* \| \notag \\
		& \leq 2hM \sqrt{2 V[x^i](x^*)} \leq 2\sqrt{2} hM\Theta_0 = 2\sqrt{2} \frac{\e \Theta_0}{M}. \notag
    \end{align}
		Now we use Lemma \ref{Lm:Azuma} with $Z = \sum\limits_{i=0}^{k-1} h\delta_i$.
    Clearly, $\Ex Z = \Ex \Big[ \sum\limits_{i=0}^{k-1} h\delta_i \Big] = 0$ and we can take $c_i = 2\sqrt{2} \frac{\e \Theta_0}{M}$. Then, by Lemma \ref{Lm:Azuma}, for each $t \geq 0$,
    $$
        \Prb \left\{ \sum\limits_{i=0}^{k-1} h\delta_i \geq t \right\} \leq \exp\left( - \frac{t^2}{2\sum\limits_{i=0}^{k-1} c_i^2} \right) = \exp\left( - \frac{t^2 M^2}{16\e^2 \Theta_0^2k} \right).
    $$
		In other words, for each $\sigma \in (0,1)$
    \begin{equation}
        \Prb  \left\{  \sum\limits_{i=0}^{k-1} h\delta_i \geq \frac{4\e\Theta_0}{M} \sqrt{k \ln \Big( \frac{1}{\sigma} \Big)} \right\} \leq \sigma. \notag
    \end{equation}
    Applying this inequality to \eqref{eq:ThSMDHPComplProof1}, we obtain, for any $\sigma \in (0,1)$,
    $$
        \Prb  \left\{ h |I| \big( f(\bar{x}^k) - f(x_*) \big) < \e h|I| - \frac{\e^2 k}{2M^2} + \Theta_0^2 + \frac{4\e\Theta_0}{M} \sqrt{k \ln \Big( \frac{1}{\sigma} \Big)} \right\} \geq 1-\sigma.
    $$
    Then, by \eqref{eq:SMDHPComplEst} , we have
    \begin{align}
    - \frac{\e^2 k}{2M^2} + \Theta_0^2 + \frac{4\e\Theta_0}{M} \sqrt{k \ln \Big( \frac{1}{\sigma} \Big)} &< \Theta_0^2 \left(-\frac{71}{2} \ln \Big( \frac{1}{\sigma} \Big) + 1 + 4\ln \Big( \frac{1}{\sigma} \Big) \sqrt{71} \right) \notag \\
		& < \Theta_0^2 \left(-\frac{3}{2}\ln \Big( \frac{1}{\sigma} \Big) +1 \right).
		\end{align}
		Since $\sigma \leq 0.5 < \exp(-2/3)$, we have $-\frac{3}{2}\ln \Big( \frac{1}{\sigma} \Big) +1 <0$ and
    $$
        \Prb  \left\{ h  |I|  \big( f(\bar{x}^k) - f(x^*) \big) < h |I| \e \right\} \geq 1-\sigma.
    $$
    Thus, with probability at least $1-\sigma$, the inequality is strict, the case of $I = \emptyset$ is impossible, and $\bar{x}^k$ is correctly defined. Dividing the both sides of it by $h \cdot |I|$, we obtain that  $\Prb  \left\{ f(\bar{x}^k) - f(x^*) \leq \e \right\} \geq 1-\sigma$.
    At the same time, for $i\in I$ it holds that $g(x^i) \leq \e$. Then, by the definition of $\bar{x}^k$ and the convexity of $g$, again with probability at least $1-\sigma$
    $$
        g(\bar{x}^k) \leq \frac{1}{|I| } \sum\limits_{i\in I} g(x^i) \leq \e .
    $$	
		Thus, $\bar{x}^k$ is an $(\e,\sigma)$-solution to \eqref{eq:PrSt} in the sense of \eqref{eq:HPrSolDef}.
\qed
\end{proof}

\subsection{Strongly Convex Objective Function, Control of Expectation}
In this subsection, we consider the setting of Subsection \ref{S:SMDExp}, but, as in Subsection \ref{S:SCNS}, make the following additional assumptions. First, we assume that functions $f$ and $g$ are strongly convex. Second, without loss of generality, we assume that $0 = \arg \min_{x \in X} d(x)$. Third, we assume that we are given a starting point $x_0 \in X$ and a number $R_0 >0$ such that $\| x_0 - x_* \|_E^2 \leq R_0^2$. Finally, we make the following assumption (cf. \eqref{eq:dUpBound}) that $d$ is bounded in the following sense. Assume that $x_*$ is some fixed point and $x$ is a random point such that $\Ex_x \big[ \| x-x_* \|_E^2 \big] \leq R^2$, then
\begin{equation}
    \Ex_x \Big[ d\Big( \frac{x-x_*}{R} \Big) \Big] \leq \frac{\Omega}{2},
		\label{eq:expdUpBound}
\end{equation}
where $\Omega$ is some known number and $\Ex_x$ denotes the expectation with respect to random vector $x$. For example, this assumption holds for Euclidean proximal setup.
Unlike the method introduced in \cite{lan2016algorithms} for strongly convex problems, we present a method, which is based on the restart of Algorithm \ref{Alg:SMD}. Unfortunately, it is not clear, whether the restart technique can be combined with adaptivity to constants $M_f$, $M_g$. Thus, we assume that these constants are known.

\begin{algorithm}[h!]
\caption{Stochastic Mirror Descent (Strongly Convex Objective, Expectation Control)}
\label{Alg:SMDSC}
\begin{algorithmic}[1]
  \REQUIRE accuracy $\e > 0$; strong convexity parameter $\mu$; $\Omega$ s.t. $\Ex_x \Big[ d\Big( \frac{x-x_*}{R} \Big) \Big] \leq \frac{\Omega}{2}$ if\\ $\Ex_x \big[ \| x-x_* \|_E^2 \big] \leq R^2$; starting point $x_0$ and number $R_0$ s.t. $\| x_0 - x_* \|_E^2 \leq R_0^2$.
  \STATE  Set $d_0(x) = d\left(\frac{x-x_0}{R_0}\right)$. 
	\STATE Set $p=1$.
	\REPEAT
    \STATE Set $R_p^2 = R_0^2 \cdot 2^{-p}$.
		\STATE Set $\e_p = \frac{\mu R_p^2}{2}$.
		\STATE Set $N_p =  \Bigg\lceil \frac{\max\{M_f^2,M_g^2\} \Omega R_{p-1}^2}{\e_p^2} \Bigg\rceil$
		\STATE Set $x_p$ as the output of Algorithm \ref{Alg:SMD} with accuracy $\e_p$, number of iterations $N_p$, prox-function $d_{p-1}(\cdot)$ and $\frac{\Omega}{2}$ as $\Theta_0^2$.
		\STATE $d_p(x) \gets d\left(\frac{x - x_p}{R_p}\right)$.
    \STATE Set $p = p + 1$.
  \UNTIL{$p>\log_2 \frac{\mu R_0^2}{2\e}$.}
  \ENSURE $x_p$.
\end{algorithmic}
\end{algorithm}

The following lemma can be proved in the same way as Lemma \ref{Lm:SCFuncToArg}.
\begin{lemma}
\label{Lm:SCExpFuncToArg}
Let $f$ and $g$ be strongly convex functions with the same parameter $\mu$ and $x_*$ be a solution of problem~\eqref{eq:PrSt}. Assume that, for some random $\tilde{x}\in X$,
    $$\Ex f(\tilde{x}) - f(x_*) \leq \e, \hspace{0.3cm} g(\tilde{x}) \leq\e.$$
    Then
    $$\frac{\mu}{2} \Ex \| \tilde{x} - x_* \|_E^2 \leq \e.$$
\end{lemma}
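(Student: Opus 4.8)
The plan is to mirror the proof of Lemma~\ref{Lm:SCFuncToArg}; the only genuinely new ingredient is that $\tilde{x}$ is random, so every inequality must be derived pointwise (almost surely) and the expectation taken only at the very end, exploiting its linearity. First I would invoke the regularity of \eqref{eq:PrSt} and the same necessary optimality condition at $x_*$ as in Lemma~\ref{Lm:SCFuncToArg}: there exist $\lambda_0,\lambda\geq 0$, not both zero, and subgradients $\nabla f(x_*),\nabla g(x_*)$, with $\la \lambda_0\nabla f(x_*)+\lambda\nabla g(x_*),x-x_*\ra\geq 0$ for every $x\in X$ and $\lambda g(x_*)=0$.

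Rather than splitting into the three cases of Lemma~\ref{Lm:SCFuncToArg}, I would handle them uniformly, which is precisely what makes the passage to randomness painless. Writing the strong convexity inequalities for $f$ and $g$ at $x_*$, evaluated at the random point $\tilde{x}\in X$, multiplying them by $\lambda_0$ and $\lambda$ respectively and adding, I obtain, almost surely,
\begin{align}
\lambda_0 f(\tilde{x}) + \lambda g(\tilde{x}) &\geq \lambda_0 f(x_*) + \lambda g(x_*) + \la \lambda_0\nabla f(x_*)+\lambda\nabla g(x_*),\tilde{x}-x_*\ra \notag \\
&\quad + (\lambda_0+\lambda)\frac{\mu}{2}\|\tilde{x}-x_*\|_E^2. \notag
\end{align}
Since $\tilde{x}\in X$, the variational inequality makes the cross term nonnegative, so it may be discarded from the upper bound, while complementary slackness removes $\lambda g(x_*)$; this leaves, almost surely,
$$
(\lambda_0+\lambda)\frac{\mu}{2}\|\tilde{x}-x_*\|_E^2 \leq \lambda_0\big(f(\tilde{x})-f(x_*)\big) + \lambda g(\tilde{x}).
$$

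Finally I would take the expectation of this almost sure inequality. By linearity of $\Ex$ and the hypotheses $\Ex f(\tilde{x})-f(x_*)\leq\e$ and $g(\tilde{x})\leq\e$ (whence $\Ex g(\tilde{x})\leq\e$), the right-hand side is at most $(\lambda_0+\lambda)\e$; dividing by $\lambda_0+\lambda>0$ (positive because the multipliers are nonnegative and not both zero) gives $\frac{\mu}{2}\Ex\|\tilde{x}-x_*\|_E^2\leq\e$, as claimed.

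The step I expect to be the main obstacle is exactly the one that dictates this reorganization: in case~3 of Lemma~\ref{Lm:SCFuncToArg} ($\lambda_0,\lambda>0$) one only knows that, for each outcome, at least one of $\la\nabla g(x_*),\tilde{x}-x_*\ra\geq 0$ or $\la\nabla f(x_*),\tilde{x}-x_*\ra\geq 0$ holds, and which of the two can depend on the realization of $\tilde{x}$. This realization-dependent dichotomy cannot be pulled outside the expectation, so the clean remedy is to keep the single weighted inequality above as an almost sure bound and defer the expectation to the last step.
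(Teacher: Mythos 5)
Your proof is correct. Note that the paper gives no separate argument here: it only remarks that the lemma ``can be proved in the same way as Lemma~\ref{Lm:SCFuncToArg},'' i.e.\ by the three-case analysis on the multipliers $(\lambda_0,\lambda)$. Your route is genuinely different: instead of branching on cases, you form the single weighted inequality
$(\lambda_0+\lambda)\tfrac{\mu}{2}\|\tilde{x}-x_*\|_E^2 \leq \lambda_0\bigl(f(\tilde{x})-f(x_*)\bigr)+\lambda g(\tilde{x})$
almost surely and only then take expectations, which is exactly what the randomness requires. Your diagnosis of the obstacle is accurate and worth emphasizing: in case~3 of Lemma~\ref{Lm:SCFuncToArg} the dichotomy ``either $\la\nabla g(x_*),\tilde{x}-x_*\ra\geq 0$ or $\la\nabla f(x_*),\tilde{x}-x_*\ra\geq 0$'' is realization-dependent, and on the event where only the $g$-branch applies one has no pointwise control of $f(\tilde{x})-f(x_*)$ (which may be negative there, since $\tilde{x}$ need not be feasible), so the hypothesis $\Ex f(\tilde{x})-f(x_*)\leq\e$ cannot be localized to the complementary event. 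A literal transcription of the deterministic case analysis therefore does not go through, whereas your unified Lagrangian combination does; it also subsumes cases~1 and~2 (where one multiplier vanishes) without extra work. In short, your argument is not just an alternative but a repair of the gap hidden in the paper's ``same way'' remark.
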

\begin{theorem}
\label{Th:SMDSCCompl}
	Let equalities \eqref{eq:unbiased} and inequalities \eqref{eq:boundStochSubgr} hold and $f$, $g$ be strongly convex with the same parameter $\mu$.
	Also assume that the prox function $d(x)$ satisfies \eqref{eq:expdUpBound} and the starting point $x_0 \in X$ and a number $R_0 >0$ are such that $\| x_0 - x_* \|_E^2 \leq R_0^2$.	
	Then, the point $x_p$ returned by Algorithm \ref{Alg:SMDSC} is an expected $\e$-solution to \eqref{eq:PrSt} in the sense of \eqref{eq:ExpSolDef} and  $\Ex \|x_p-x_*\|_E^2\leq \frac{2\e}{\mu}$. At the same time, the total number of inner iterations of Algorithm \ref{Alg:SMD} does not exceed
    \begin{equation}
        \left\lceil \log_2 \frac{\mu R_0^2}{2\e}\right\rceil + \frac{32\Omega \max\{M_f^2,M_g^2\} }{\mu \e}.
    \end{equation}
	\end{theorem}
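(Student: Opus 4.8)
The plan is to follow the restart argument used in the proof of Theorem \ref{Th:MDSCCompl}, replacing the deterministic inner guarantee coming from Theorem \ref{Th:MDCompl} by an expectation guarantee for Algorithm \ref{Alg:SMD}, and replacing the deterministic lemma by its expectation analog, Lemma \ref{Lm:SCExpFuncToArg}. As in Theorem \ref{Th:MDSCCompl}, the first observation is that on the outer iteration $p$ the prox-function $d_{p-1}(x)=d((x-x_{p-1})/R_{p-1})$ is $1$-strongly convex with respect to $\|\cdot\|_E/R_{p-1}$, whose dual norm is $R_{p-1}\|\cdot\|_{E,*}$; hence inside the inner run every $M_k$ is effectively rescaled to $M_k R_{p-1}$, and $\max\{M_f,M_g\}$ becomes $R_{p-1}\max\{M_f,M_g\}$.

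The key building block I would establish first is a fixed-horizon, expectation-control guarantee for Algorithm \ref{Alg:SMD}: run with the deterministic step $h=\e/M^2$ (here $M=\max\{M_f,M_g\}$) and $N\geq 2M^2\Theta_0^2/\e^2$ iterations, its output $\bar{x}^N$ satisfies $\Ex f(\bar{x}^N)-f(x_*)\leq\e$ and $g(\bar{x}^N)\leq\e$ almost surely. This is derived exactly as \eqref{eq:ThSMDCproof1}--\eqref{eq:ThSMDHPComplProof1} in Theorems \ref{Th:SMDCompl} and \ref{Th:SMDHPCompl}: summing Lemma \ref{Lm:MDProp} over productive and non-productive steps with $\Delta$ equal to the stochastic error, using $g(x^i)-g(x_*)>\e$ on $J$ and the definition of $h$ to cancel the term $-\e^2 N/(2M^2)$ against the initial divergence $V[x^0](x_*)$, and finally taking expectation so that the martingale-difference terms $\delta_i$ from \eqref{eq:deltaiDef} vanish by \eqref{eq:unbiased}. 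The only change from Theorem \ref{Th:SMDCompl} is that here the bound on the initial divergence is not \eqref{eq:VBound} but comes from \eqref{eq:expdUpBound}: since $x^0=x_{p-1}$ is the minimizer of $d_{p-1}$, we have $V[x^0](x_*)\leq d_{p-1}(x_*)=d((x_{p-1}-x_*)/R_{p-1})$, and \eqref{eq:expdUpBound} applied with $R=R_{p-1}$ bounds its expectation by $\Omega/2$ whenever $\Ex\|x_{p-1}-x_*\|_E^2\leq R_{p-1}^2$.

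With this block in hand I would prove by induction on $p$ that $\Ex\|x_p-x_*\|_E^2\leq R_p^2=R_0^2\,2^{-p}$. The base case $p=0$ is the assumption on $x_0,R_0$. For the step, the inductive hypothesis together with \eqref{eq:expdUpBound} gives $\Ex d_{p-1}(x_*)\leq\Omega/2$, so on restart $p$ the rescaled horizon $N_p=\lceil M^2\Omega R_{p-1}^2/\e_p^2\rceil$ meets the condition $N_p\geq 2(MR_{p-1})^2(\Omega/2)/\e_p^2$ of the building block; hence $\Ex f(x_p)-f(x_*)\leq\e_p$ and $g(x_p)\leq\e_p$ with $\e_p=\mu R_p^2/2$. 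Lemma \ref{Lm:SCExpFuncToArg} then yields $\Ex\|x_p-x_*\|_E^2\leq 2\e_p/\mu=R_p^2$, closing the induction. Consequently, once $p>\log_2(\mu R_0^2/(2\e))$ we obtain $\Ex f(x_p)-f(x_*)\leq\e$ and $g(x_p)\leq\e$ almost surely, i.e. an expected $\e$-solution in the sense of \eqref{eq:ExpSolDef}, together with $\Ex\|x_p-x_*\|_E^2\leq 2\e/\mu$.

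Finally, the total number of inner iterations is $\sum_{p=1}^{\hat{p}}N_p$ with $\hat{p}=\lceil\log_2(\mu R_0^2/(2\e))\rceil$; substituting $\e_p=\mu R_0^2 2^{-p}/2$ and $R_{p-1}^2=2R_0^2 2^{-p}$ turns each summand into $1+8M^2\Omega 2^{p}/(\mu^2R_0^2)$, and summing the geometric series exactly as in Theorem \ref{Th:MDSCCompl} gives the stated bound $\lceil\log_2(\mu R_0^2/(2\e))\rceil+32\Omega M^2/(\mu\e)$. The main obstacle I anticipate is the expectation step in the building block: because the counter $|I|$ is entangled with the whole trajectory, one must argue carefully---as in Theorem \ref{Th:SMDCompl}---that the random initial divergence $d_{p-1}(x_*)$ and the martingale increments $\delta_i$ interact correctly, conditioning on the $\sigma$-algebra generated by the first $p-1$ restarts before averaging, so that \eqref{eq:expdUpBound} and \eqref{eq:unbiased} are applied in the right order.
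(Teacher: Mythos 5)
Your proposal follows essentially the same route as the paper's proof: the norm-rescaling observation, an induction on $p$ establishing $\Ex\|x_p-x_*\|_E^2\leq R_p^2$ via the fixed-step, fixed-horizon expectation bound for Algorithm \ref{Alg:SMD} (derived from Lemma \ref{Lm:MDProp} exactly as in \eqref{eq:ThSMDHPComplProof1}), the use of \eqref{eq:expdUpBound} to bound the expected initial Bregman divergence by $\Omega/2$, Lemma \ref{Lm:SCExpFuncToArg} to close the induction, and the same geometric-series count of inner iterations. Your constant $8$ in place of the paper's $16$ for the summand $\Omega M^2 R_{p-1}^2/\e_p^2$ is in fact the correct evaluation of that ratio and only tightens the final bound, so this is not a discrepancy of substance.
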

\begin{proof}
\smartqed
		Let us denote $M = \max\{M_f,M_g\}$.
    Observe that, for all $p \geq 0$, the function $d_{p}(x)$ defined in Algorithm \ref{Alg:SMDSC} is 1-strongly convex w.r.t. the norm $\|\cdot\|_E / R_{p}$. The conjugate of this norm is $R_{p} \|\cdot\|_{E,*}$. This means that, at each outer iteration $p$, $M$ changes to $MR_{p-1}$, where $p$ is the number of outer iteration.
		We show by induction that, for all $p\geq 0$, $\Ex\|x_p-x_*\|_E^2\leq R_p^2$. For $p=0$ it holds by the definition of $x_0$ and $R_0$.
		
		Let us assume that this inequality holds for some $p-1$ and show that it holds for $p$.
		At iteration $p$, we start Algorithm \ref{Alg:SMD} with starting point $x_{p-1}$ and stepsize $h_p = \frac{\e_p}{M^2R_{p-1}^2}$.
		Using the same steps as in the proof of Theorem \ref{Th:SMDHPCompl}, after $N_p$ iterations of Algorithm \ref{Alg:SMD} (see \eqref{eq:ThSMDHPComplProof1}), we obtain
		\begin{align}
		&h_p|I_p| \big( f(\bar{x}_p^k) - f(x_*) \big) < \e_p h_p|I_p| - \frac{\e_p^2 N_p}{2M^2R_{p-1}^2} + V_{p-1}[x_{p-1}](x_*) + h_p\sum\limits_{i=0}^{N_p-1} \delta_i ,\label{eq:ThSMDSCComplProof1}
    \end{align}	
		where $V_{p-1}[z](x)$ is the Bregman divergence corresponding to $d_{p-1}(x)$ and $I_p$ is the set of "productive steps". Using the definition of $d_{p-1}$, we have
		$$
		V_{p-1}[x_{p-1}](x_*) = d_{p-1}(x_{*}) - d_{p-1}(x_{p-1}) - \la \nabla d_{p-1}(x_{p-1}), x_{*} - x_{p-1} \ra \leq  d_{p-1}(x_{*}).
		$$
		Taking expectation with respect to $x_{p-1}$ in \eqref{eq:ThSMDSCComplProof1} and using inductive assumption $\Ex\|x_{p-1}-x_*\|_E^2\leq R_{p-1}^2$ and \eqref{eq:expdUpBound}, we obtain, substituting $N_p$,
		\begin{align}
		&h_p|I_p| \big( f(\bar{x}_p^k) - f(x_*) \big) < \e_p h_p|I_p| - \frac{\e_p^2 N_p}{2M^2R_{p-1}^2} + \frac{\Omega}{2} + h_p\sum\limits_{i=0}^{N_p-1} \delta_i \leq \e_p h_p|I_p| + h_p\sum\limits_{i=0}^{N_p-1} \delta_i,\label{eq:ThSMDSCComplProof2}
    \end{align}
		Taking the expectation and using \eqref{eq:unbiased}, as long as the inequality is strict and the case of $I_p = \emptyset$ is impossible, we obtain
    \begin{equation}
        \Ex f(\bar{x}_p^k) - f(x_*) \leq \e_p .
    \end{equation}
    At the same time, for $i \in I_p$ it holds that $g(x^i) \leq \e_p$. Then, by the definition of $\bar{x}_p^k$ and the convexity of $g$,
    $$
         g(\bar{x}_p^k) \leq \frac{1}{|I_p|}\sum\limits_{i\in I_p} g(x^i) \leq \e_p.
    $$
		Thus, we can apply Lemma \ref{Lm:SCExpFuncToArg} and obtain
		$$
		\Ex \| x_{p} - x_* \|_E^2 \leq \frac{2\e_{p}}{\mu} = R_{p}^2.
		$$
		Thus, we proved that, for all $p\geq 0$, $\Ex \|x_p-x_*\|_E^2\leq R_p^2 =  R_0^2 \cdot 2^{-p}$. At the same time, we have, for all $p\geq 1$,
		$$
		\Ex f(x_{p})-f(x_*) \leq \frac{\mu R_{0}^2}{2} \cdot 2^{-p}, \quad g(x_{p}) \leq \frac{\mu R_{0}^2}{2} \cdot 2^{-p}.
		$$
		Thus, if $p > \log_2 \frac{\mu R_0^2}{2\e}$, $x_p$ is an $\e$-solution to \eqref{eq:PrSt} in the sense of \eqref{eq:ExpSolDef} and
		$$
		\Ex \|x_p-x_*\|_E^2\leq R_0^2 \cdot 2^{-p} \leq \frac{2\e}{\mu}.
		$$
		
		Let us now estimate the total number $N$ of inner iterations, i.e. the iterations of Algorithm \ref{Alg:MDNS}. Let us denote $\hat{p} =  \left\lceil \log_2 \frac{\mu R_0^2}{2\e}\right\rceil$. We have
		\begin{align}
		N &= \sum_{p=1}^{\hat{p}} N_p \leq \sum_{p=1}^{\hat{p}} \left(1 + \frac{\Omega \max\{M_f^2,M_g^2\} R_{p-1}^2}{\e_{p}^2}\right)
		= \sum_{p=1}^{\hat{p}} \left(1 + \frac{16\Omega \max\{M_f^2,M_g^2\}2^p}{\mu^2 R_0^2}\right) \notag \\
		& \leq \hat{p} + \frac{32\Omega \max\{M_f^2,M_g^2\}2^{ \hat{p} }}{\mu^2 R_0^2} \leq \hat{p} + \frac{32\Omega \max\{M_f^2,M_g^2\} }{\mu \e}. \notag
		\end{align}
\qed
\end{proof}

\subsection{Strongly Convex Objective Function, Control of Large Deviation}
In this subsection, we consider the setting of Subsection \ref{S:SMDHP}, but make the following additional assumptions. First, we assume that functions $f$ and $g$ are strongly convex. Second, without loss of generality, we assume that $0 = \arg \min_{x \in X} d(x)$. Third, we assume that we are given a starting point $x_0 \in X$ and a number $R_0 >0$ such that $\| x_0 - x_* \|_E^2 \leq R_0^2$. Finally, instead of \eqref{eq:VBound}, we assume that the Bregman divergence satisfies quadratic growth condition
\begin{equation}
    V[z](x) \leq \frac{\Omega}{2}\|x-z\|_E^2, \quad x,z \in X.
		\label{eq:VQuadGrowth}
\end{equation}
where $\Omega$ is some known number. For example, this assumption holds for Euclidean proximal setup.
Unlike the method introduced in \cite{lan2016algorithms} for strongly convex problems, we present a method, which is based on the restart of Algorithm \ref{Alg:SMD}. Unfortunately, it is not clear, whether the restart technique can be combined with adaptivity to constants $M_f$, $M_g$. Thus, we assume that these constants are known.

\begin{algorithm}[h!]
\caption{Stochastic Mirror Descent (Strongly Convex Objective, Control of Large Deviation)}
\label{Alg:SMDSCHP}
\begin{algorithmic}[1]
  \REQUIRE accuracy $\e > 0$; strong convexity parameter $\mu$; $\Omega$ s.t. $V[x](y) \leq \frac{\Omega}{2}\|x-y\|_E^2, \quad x,y \in X$; starting point $x_0$ and number $R_0$ s.t. $\| x_0 - x_* \|_E^2 \leq R_0^2$.
  \STATE  Set $d_0(x) = d\left(\frac{x-x_0}{R_0}\right)$.
	\STATE Set $p=1$.
	\REPEAT
    \STATE Set $R_p^2 = R_0^2 \cdot 2^{-p}$.
		\STATE Set $\e_p = \frac{\mu R_p^2}{2}$.
		\STATE Set $N_p =  \Bigg\lceil 70 \frac{\max\{M_f^2,M_g^2\} \Omega R_{p-1}^2}{\e_p^2}  \ln\left(\frac{1}{\sigma}\log_2 \frac{\mu R_0^2}{2\e} \right) \Bigg\rceil$.
		\STATE Set $X_p = \{x \in X: \|x-x_{p-1}\|_E^2 \leq R_{p-1}^2 \}$.
		\STATE Set $x_p$ as the output of Algorithm \ref{Alg:SMD} with accuracy $\e_p$, number of iteration $N_p$, prox-function $d_{p-1}(\cdot)$, $\Omega$ as $\Theta_0^2$ and $X_p$ as the feasible set.
		\STATE $d_p(x) \gets d\left(\frac{x - x_p}{R_p}\right)$.
    \STATE Set $p = p + 1$.
  \UNTIL{$p>\log_2 \frac{\mu R_0^2}{2\e}$.}
  \ENSURE $x_p$.
\end{algorithmic}
\end{algorithm}

\begin{theorem}
\label{Th:SMDSCHPCompl}
Let equalities \eqref{eq:unbiased} and inequalities \eqref{eq:gLipCont}, \eqref{eq:fLipCont}, \eqref{eq:boundStochSubgr} hold.
Let $f$, $g$ be strongly convex with the same parameter $\mu$.
Also assume that the Bregman divergence $V[z](x)$ satisfies \eqref{eq:VQuadGrowth} and the starting point $x_0 \in X$ and a number $R_0 >0$ are such that $\| x_0 - x_* \|_E^2 \leq R_0^2$.	
Then, the point $x_p$ returned by Algorithm \ref{Alg:SMDSCHP} is an $(\e,\sigma)$-solution to \eqref{eq:PrSt} in the sense of \eqref{eq:HPrSolDef} and $\|x_p-x_*\|_E^2\leq \frac{2\e}{\mu}$ with probability at least $1-\sigma$. At the same time, the total number of inner iterations of Algorithm \ref{Alg:SMD} does not exceed
    \begin{equation}
        \left\lceil \log_2 \frac{\mu R_0^2}{2\e}\right\rceil +  \frac{2240\Omega \max\{M_f^2,M_g^2\}  }{\mu \e}\left(\ln\frac{1}{\sigma} + \ln \log_2 \frac{\mu R_0^2}{2\e} \right). \notag
    \end{equation}
\end{theorem}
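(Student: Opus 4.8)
The plan is to follow the restart analysis of Theorems \ref{Th:MDSCCompl} and \ref{Th:SMDSCCompl}, but to close the induction on a chain of high-probability events rather than in expectation, invoking the large-deviation guarantee of Theorem \ref{Th:SMDHPCompl} at each restart. Write $\hat p=\left\lceil\log_2\frac{\mu R_0^2}{2\e}\right\rceil$ for the number of restarts and allocate to each restart the confidence level $\sigma/\hat p$; this is exactly the source of the factor $\ln\left(\frac1\sigma\log_2\frac{\mu R_0^2}{2\e}\right)=\ln\frac{\hat p}{\sigma}$ appearing in $N_p$, as opposed to the $\ln\frac1\sigma$ of Theorem \ref{Th:SMDHPCompl}. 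Since independent realizations of $\xi,\zeta$ are drawn at every inner step and no randomness is reused across restarts, each inner run may be analyzed conditionally on its (random) starting point $x_{p-1}$.

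First I would record the rescaling, as in the earlier restart proofs: $d_{p-1}$ is $1$-strongly convex with respect to $\|\cdot\|_E/R_{p-1}$, its conjugate norm is $R_{p-1}\|\cdot\|_{E,*}$, the subgradient bounds $M_f,M_g$ become $R_{p-1}M_f,R_{p-1}M_g$, and $V_{p-1}[z](x)=V\!\left[\frac{z-x_{p-1}}{R_{p-1}}\right]\!\left(\frac{x-x_{p-1}}{R_{p-1}}\right)$. With these substitutions $N_p$ is precisely the iteration count \eqref{eq:SMDHPComplEst} for a run of Algorithm \ref{Alg:SMD} in the rescaled norm, with accuracy $\e_p$, confidence $\sigma/\hat p$, and $\Theta_0^2$ proportional to $\Omega$.

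The heart of the proof is the inductive step. Define $A_p=\{\|x_p-x_*\|_E^2\le R_p^2\}$; $A_0$ holds by the assumption on $x_0,R_0$. Conditioning on $A_{p-1}$, I would apply Theorem \ref{Th:SMDHPCompl} to the $p$-th inner run to conclude that, with conditional probability at least $1-\sigma/\hat p$,
\[
f(x_p)-f(x_*)\le\e_p,\qquad g(x_p)\le\e_p,
\]
after which the deterministic Lemma \ref{Lm:SCFuncToArg} yields $\frac\mu2\|x_p-x_*\|_E^2\le\e_p$, i.e. $\|x_p-x_*\|_E^2\le\frac{2\e_p}{\mu}=R_p^2$, so $A_p$ holds. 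The subtle point, and the reason both \eqref{eq:VQuadGrowth} and the ball $X_p=\{x:\|x-x_{p-1}\|_E^2\le R_{p-1}^2\}$ are introduced, is that the bounded-differences bound feeding Azuma--Hoeffding (Lemma \ref{Lm:Azuma}) must hold almost surely with deterministic constants $c_i$, not merely in expectation as in Theorem \ref{Th:SMDSCCompl}. On $A_{p-1}$ we have $x_*\in X_p$, every inner iterate stays in $X_p$ by construction, so $\|x^i-x_*\|_E\le 2R_{p-1}$ a.s., and \eqref{eq:VQuadGrowth} then bounds $V_{p-1}[x^i](x_*)$ by a fixed multiple of $\Omega$ almost surely; this makes the $c_i$ genuinely deterministic and lets the estimate \eqref{eq:ThSMDHPComplProof1} go through verbatim.

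Finally I would assemble the pieces. A union bound over the restarts, each conditioned on all previous ones having succeeded, gives $\Prb(\text{some restart fails})\le\hat p\cdot\frac{\sigma}{\hat p}=\sigma$, so with probability at least $1-\sigma$ every $A_p$ holds; in particular $\|x_{\hat p}-x_*\|_E^2\le R_{\hat p}^2\le\frac{2\e}{\mu}$ and, since $\e_{\hat p}\le\e$, the two inequalities in \eqref{eq:HPrSolDef} hold, establishing the $(\e,\sigma)$-solution claim. The iteration count comes from the same geometric summation as in Theorem \ref{Th:SMDSCCompl}, now carrying the extra factor $70\ln\frac{\hat p}{\sigma}$: with $R_{p-1}^2/\e_p^2$ of order $2^p/(\mu^2R_0^2)$ and $\sum_{p=1}^{\hat p}2^p\le 2^{\hat p+1}\le 2\mu R_0^2/\e$, one recovers the stated bound with constant $2240=70\cdot 32$ and the split $\ln\frac1\sigma+\ln\log_2\frac{\mu R_0^2}{2\e}$. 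I expect the main obstacle to be exactly this probabilistic bookkeeping---the conditioning-and-union-bound chain together with the verification that the bounded-differences condition holds almost surely; once $X_p$ secures the latter, the per-restart estimate is a direct transcription of Theorem \ref{Th:SMDHPCompl}.
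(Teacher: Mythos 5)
Your proposal follows essentially the same route as the paper's proof: the same events $A_p$, the same induction giving $\Prb\{A_p\mid A_{p-1}\}\ge 1-\sigma/\hat p$ via Theorem \ref{Th:SMDHPCompl} applied in the rescaled norm with confidence $\sigma/\hat p$, Lemma \ref{Lm:SCFuncToArg} to convert function accuracy into $\|x_p-x_*\|_E^2\le R_p^2$, the chained union bound, and the same geometric summation yielding $2240=70\cdot 32$. You also correctly pinpoint the role of $X_p$ together with \eqref{eq:VQuadGrowth} in making the Azuma--Hoeffding increments deterministically bounded, which is exactly how the paper justifies taking $\Omega$ as $\Theta_0^2$ on each restart.
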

\begin{proof}
\smartqed
		Let us denote $M = \max\{M_f,M_g\}$.
    Observe that, for all $p \geq 0$, the function $d_{p}(x)$ defined in Algorithm \ref{Alg:SMDSCHP} is 1-strongly convex w.r.t. the norm $\|\cdot\|_E / R_{p}$. The conjugate of this norm is $R_{p} \|\cdot\|_{E,*}$. This means that, at each outer iteration $p$, $M$ changes to $MR_{p-1}$, where $p$ is the number of outer iteration.
		
		Let $A_p$, $p\geq 0$ be the event $A_p = \{\|x_p - x_*\|_E^2 \leq R_p^2 \}$ and $\bar{A}_p$ be its complement. Note that, by the definition of $x_0$ and $R_0$, $A_0$ holds with probability 1. Denote $\hat{p} =  \left\lceil \log_2 \frac{\mu R_0^2}{2\e}\right\rceil$.
		
		We now show by induction that, for all $p\geq 1$, $\Prb \{A_p|A_{p-1} \} \geq 1- \frac{\sigma}{\hat{p}}$. 		
		By inductive assumption, $A_{p-1}$ holds and we have $\|x_{p-1} - x_*\|_E^2 \leq R_{p-1}^2$. At iteration $p$, we start Algorithm \ref{Alg:SMD} with starting point $x_{p-1}$, feasible set $X_{p}$ and Bregman divergence $V_{p-1}[z](x)$ corresponding to $d_{p-1}(x)$. Thus, by \eqref{eq:VQuadGrowth}, we have
		\begin{align}
		\max_{x,z \in X_p}  V_{p-1}[z](x) & = \max_{x,z \in X_p}  d\left(\frac{x-x_{p-1}}{R_{p-1}}\right) - d\left(\frac{z-x_{p-1}}{R_{p-1}}\right) \notag \\
		& \hspace{1em}- \left\la \nabla d\left(\frac{z-x_{p-1}}{R_{p-1}}\right), \frac{x-x_{p-1}}{R_{p-1}} - \frac{z-x_{p-1}}{R_{p-1}} \right\ra \notag \\
		& = \max_{x,z \in X_p}  V\left[\frac{z-x_{p-1}}{R_{p-1}}\right] \left(\frac{x-x_{p-1}}{R_{p-1}}\right)  \notag \\
		& \leq \max_{x,z \in X_p}  \frac{\Omega \|x-z\|_E^2}{2R_{p-1}^2} \leq \Omega. \notag
		\end{align}	
		Hence, by Theorem \ref{Th:SMDHPCompl} with $\sigma_p = \frac{\sigma}{\hat{p}}$, after $N_p$ iterations of Algorithm \ref{Alg:SMD}, we have
		$$
		\Prb\left\{ f(x_p) - f(x_{*}) \leq \e_p, \quad g(x_p) \leq \e_p | A_{p-1} \right\} \geq 1-\frac{\sigma}{\hat{p}}.
		$$
		Whence, by Lemma \ref{Lm:SCFuncToArg},
		$$
		\Prb\left\{ A_p | A_{p-1} \right\}  = \Prb\left\{ \|x_{p} - x_*\|_E^2 \leq R_{p}^2 | A_{p-1} \right\}  \geq 1-\frac{\sigma}{\hat{p}},
		$$
		which finishes the induction proof.
		
		At the same time,
		\begin{align}
		& \Prb\left\{ f(x_{\hat{p}}) - f(x_{*}) > \e_{\hat{p}} \quad \text{or} \quad g(x_{\hat{p}}) > \e_{\hat{p}}\right\} \notag \\
		& = \Prb \left\{\left. f(x_{\hat{p}}) - f(x_{*}) > \e_{\hat{p}} \quad \text{or} \quad g(x_{\hat{p}}) > \e_{\hat{p}} \right| A_{{\hat{p}}-1} \cup \bar{A}_{{\hat{p}}-1} \right\} \notag \\
		& = \Prb \left\{\left. f(x_{\hat{p}}) - f(x_{*}) > \e_{\hat{p}} \quad \text{or} \quad g(x_{\hat{p}}) > \e_{\hat{p}}\right| A_{{\hat{p}}-1}  \right\} \Prb \{ A_{{\hat{p}}-1} \} \notag \\
		& + \Prb \left\{\left. f(x_{\hat{p}}) - f(x_{*}) > \e_{\hat{p}} \quad \text{or} \quad g(x_{\hat{p}}) > \e_{\hat{p}} \right| \bar{A}_{{\hat{p}}-1}  \right\} \Prb \{\bar{A}_{{\hat{p}}-1} \}  \notag \\
		& \leq \frac{\sigma}{\hat{p}} + \Prb \{\bar{A}_{{\hat{p}}-1} \} \stackrel{(\ast)}{\leq}  \frac{\sigma}{\hat{p}}+ \Prb \left\{f(x_{{\hat{p}}-1}) - f(x_{*}) > \e_{{\hat{p}}-1} \quad \text{or} \quad g(x_{{\hat{p}}-1}) > \e_{{\hat{p}}-1}\right\}\notag\\
		&  \leq 2\cdot\frac{\sigma}{\hat{p}} + \Prb \{\bar{A}_{{\hat{p}}-2} \} \leq ... \leq \frac{\hat{p} - 1}{\hat{p}} \cdot \sigma + \Prb \{\bar{A}_{1}\},
        \end{align}
		where $(\ast)$ follows from Lemma \ref{Lm:SCFuncToArg}.
		Using that $\Prb\{A_1\} = \Prb\{A_1 | A_0\} \geq 1- \frac{\sigma}{\hat{p}}$ and, hence, $\Prb \{\bar{A}_{1}\} \leq \frac{\sigma}{\hat{p}}$, we obtain
		\begin{align}
		&\Prb\left\{ f(x_{\hat{p}}) - f(x_{*}) \leq \e, \quad g(x_{\hat{p}}) \leq \e \right\} \geq 1-\sigma . \notag
		\end{align}
		Hence,
		$$
		\Prb\left\{ \|x_{\hat{p}}-x_*\|_E^2 \leq \frac{2\e}{\mu} \right\} \geq 1-\sigma.
		$$
		
		Let us now estimate the total number $N$ of inner iterations, i.e. the iterations of Algorithm \ref{Alg:SMD}. We have
		\begin{align}
		N &= \sum_{p=1}^{\hat{p}} N_p \leq \sum_{p=1}^{\hat{p}} \left(1 + 70\frac{\Omega \max\{M_f^2,M_g^2\} R_{p-1}^2}{\e_{p}^2}\ln\left(\frac{1}{\sigma}\log_2 \frac{\mu R_0^2}{2\e} \right)\right) \notag \\
		&= \sum_{p=1}^{\hat{p}} \left(1 + 1120\frac{\Omega \max\{M_f^2,M_g^2\}2^p}{\mu^2 R_0^2}\ln\left(\frac{1}{\sigma}\log_2 \frac{\mu R_0^2}{2\e} \right)\right) \notag \\
		& \leq \hat{p} + 2240\frac{\Omega \max\{M_f^2,M_g^2\}2^{ \hat{p} }}{\mu^2 R_0^2}\ln\left(\frac{1}{\sigma}\log_2 \frac{\mu R_0^2}{2\e} \right) \notag \\
		& \leq \hat{p} + 2240\frac{\Omega \max\{M_f^2,M_g^2\} }{\mu \e}\left(\ln\frac{1}{\sigma} + \ln\log_2 \frac{\mu R_0^2}{2\e} \right). \notag
		\end{align}
\qed
\end{proof}

\section{Discussion}
We conclude with several remarks concerning possible extensions of the described results.

Obtained results can be easily extended for \textit{composite optimization problems} of the form
\begin{align}
\label{eq:CompPrSt}
    \min \{ f(x) + c(x) : x \in X \subset E, g(x) + c(x) \leq 0\},
\end{align}
where $X$ is a convex closed subset of finite-dimensional real vector space $E$, $f: X \to \R$, $g: E \to \R$, $c: X \to \R$ are convex functions. Mirror Descent for unconstrained composite problems was proposed in  \cite{duchi2010composite}, see also \cite{xiao2010dual} for corresponding version of Dual Averaging \cite{nesterov2009primal-dual}. To deal with composite problems \eqref{eq:CompPrSt}, the Mirror Descent step should be changed to
$$
x_+ = \mathrm{Mirr}[x](p) = \arg\min\limits_{u\in X} \big\{ \la p, u \ra + d(u)+c(u) - \la \nabla d(x) , u \ra \big\} \quad \forall x \in X^0,
$$
where $X^0$ is defined in Section \ref{S:MD}. The counterpart of Lemma \ref{Lm:MDProp} is as follows.
\begin{lemma}
	\label{Lm:CMDProp}
    Let $f$ be some convex function over a convex closed set $X$, $h > 0$ be a stepsize, $x \in X^0$. Let the point $x_+$ be defined by
    $ x_+ = \mathrm{Mirr}[x](h \cdot (\nabla f(x) + \Delta)) $, where $\Delta \in E^*$. Then, for any $u \in X$,
    \begin{align}
    h \cdot \big( f(x) - f(u) + c(x_+) & - c(u) + \la \Delta , x - u \ra \big) \notag \\
		& \leq h \cdot \la \nabla f(x) + \Delta , x - u \ra - h \cdot \la \nabla c(x_+), u - x_+ \ra \notag \\
		& \leq \frac{h^2}{2} \| \nabla f (x)  + \Delta \|^2_{E,*} + V[x](u) - V[x_+](u).		\notag		
    \end{align}
\end{lemma}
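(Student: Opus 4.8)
The plan is to follow the proof of Lemma~\ref{Lm:MDProp} almost verbatim; the only genuinely new ingredient is the composite term $c$, whose subgradient will enter at the \emph{new} point $x_+$ rather than at $x$. Throughout I read the prox subproblem with the stepsize attached to $c$, i.e. $x_+$ minimizes $\la p,u\ra + d(u) + h\,c(u) - \la\nabla d(x),u\ra$ with $p = h(\nabla f(x)+\Delta)$, which is what makes the factors $h$ in front of the $c$-terms in the statement consistent.

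First I would write the first-order optimality condition for this subproblem at $x_+$: there exist selections $\nabla d(x_+)$ and $\nabla c(x_+)$ such that, for all $u\in X$,
\[
\la h(\nabla f(x)+\Delta) + h\nabla c(x_+) + \nabla d(x_+) - \nabla d(x),\, u - x_+ \ra \geq 0 .
\]
Then, exactly as in Lemma~\ref{Lm:MDProp}, I would split $h\la\nabla f(x)+\Delta, x-u\ra = h\la\nabla f(x)+\Delta, x-x_+\ra + h\la\nabla f(x)+\Delta, x_+-u\ra$ and bound the second summand by the optimality condition, which moves the term $h\la\nabla c(x_+), u-x_+\ra$ to the left-hand side. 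The factor $\la\nabla d(x_+)-\nabla d(x), u-x_+\ra$ is rewritten through the same three-point Bregman identity used in the original proof,
\[
\la\nabla d(x_+)-\nabla d(x),\, u-x_+\ra = V[x](u) - V[x_+](u) - V[x](x_+),
\]
and $1$-strong convexity of $d$ gives $V[x](x_+)\geq \tfrac12\|x_+-x\|_E^2$. Finally the scalar inequality $\la g,y\ra - \tfrac12\|y\|_E^2 \leq \tfrac12\|g\|_{E,*}^2$, applied with $g=h(\nabla f(x)+\Delta)$ and $y=x-x_+$, absorbs the leftover $h\la\nabla f(x)+\Delta, x-x_+\ra - \tfrac12\|x_+-x\|_E^2$. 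Collecting terms produces the right inequality of the lemma.

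For the left inequality I would only need two convexity estimates added together: convexity of $f$ at $x$ gives $f(x)-f(u)\leq \la\nabla f(x), x-u\ra$, while convexity of $c$ \emph{at} $x_+$ gives $c(x_+)-c(u)\leq \la\nabla c(x_+), x_+-u\ra = -\la\nabla c(x_+), u-x_+\ra$. Multiplying both by $h$ and adding the term $h\la\Delta, x-u\ra$ reproduces exactly the middle expression $h\la\nabla f(x)+\Delta, x-u\ra - h\la\nabla c(x_+), u-x_+\ra$, completing the chain.

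The main obstacle — really the single conceptual point — is the placement of $\nabla c(x_+)$. Because $c$ is kept inside the prox step, its subgradient surfaces at $x_+$ in the optimality condition, so the convexity bound for $c$ must be taken at $x_+$; this is precisely what forces the statement to involve $c(x_+)$ rather than $c(x)$. A secondary technical point to check is the validity of the subdifferential sum rule at the minimizer (so that compatible selections $\nabla d(x_+)$ and $\nabla c(x_+)$ exist), which holds under the regularity/continuity assumptions on $d$ imposed in Section~\ref{S:MD} together with convexity and finiteness of $c$ on $X$.
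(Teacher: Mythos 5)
Your proof is correct, and it is exactly the natural extension of the paper's proof of Lemma~\ref{Lm:MDProp}: the paper states Lemma~\ref{Lm:CMDProp} without proof, so there is nothing to diverge from, and your argument (optimality condition with the extra $h\nabla c(x_+)$ term, the same three-point Bregman identity, strong convexity of $d$, and the Fenchel--Young bound $\la g,y\ra-\tfrac12\|y\|_E^2\leq\tfrac12\|g\|_{E,*}^2$) is the intended one. Your one substantive observation is also right and worth flagging: the composite prox step as displayed in the paper reads $\arg\min_{u\in X}\{\la p,u\ra+d(u)+c(u)-\la\nabla d(x),u\ra\}$, and with that literal definition the optimality condition produces $\la\nabla c(x_+),u-x_+\ra$ \emph{without} the factor $h$, so the stated inequality chain would only match a version of the lemma in which $c(x_+)-c(u)$ is not multiplied by $h$; attaching the stepsize to the composite term, i.e. $d(u)+h\,c(u)$ as in the composite Mirror Descent of Duchi et al., is what makes the lemma hold exactly as written. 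The remaining technical point you raise --- existence of compatible selections $\nabla d(x_+)$, $\nabla c(x_+)$ via the subdifferential sum rule --- is indeed covered by the continuity of $d$ and finiteness of the convex function $c$ on $X$.
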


We considered restarting Mirror Descent only in the case of strongly convex functions. A possible extension can be in applying the restart technique to the case of uniformly convex functions $f$ and $g$ introduced in \cite{polyak1967existence} and satisfying
$$
f(y) \geq f(x) + \la \nabla f(x), y-x \ra + \frac{\mu}{2} \| y-x \|_E^\rho, \quad x, y \in X,
$$
where $\rho \geq 2$, and the same holds for $g$.
Restarting Dual Averaging \cite{nesterov2009primal-dual} to obtain subgradient methods for minimizing such functions without functional constraints, both in deterministic and stochastic setting, was suggested in \cite{juditsky2014deterministic}.
Another option is, as it was done in \cite{roulet2017sharpness} for deterministic unconstrained problems, to use sharpness condition of $f$ and $g$
$$
\mu \left(\min_{x_* \in X_*} \|x-x_*\|_E \right)^{\rho} \leq f(x) -f_*, \quad \forall x \in X,
$$
where  $f_*$ is the minimum value of $f$, $X_*$ is the set of minimizers of $f$ in Problem \eqref{eq:PrSt}, and the same holds for $g$.

In stochastic setting, motivated by randomization for deterministic problems, we considered only problems with available values of $g$. As it was done in \cite{lan2016algorithms}, one can consider more general problems of minimizing an expectation of a function under inequality constraint given by $\Ex G(x,\eta) \leq 0$, where $\eta$ is random vector. In this setting one can deal only with stochastic approximation of this inequality constraint.

\begin{acknowledgement}
The authors are very grateful to Anatoli Juditsky, Arkadi Nemirovski and Yurii Nesterov for fruitful discussions. The research by P. Dvurechensky and A. Gasnikov presented in Section 4 was conducted in IITP RAS and supported by the Russian Science Foundation grant (project 14-50-00150). The research by F. Stonyakin presented in subsection 3.3 was partially supported by the grant of the President of the Russian Federation for young candidates of sciences, project no. MK-176.2017.1.

\end{acknowledgement}

\bibliographystyle{abbrv}
\bibliography{references}

\end{document}